\crefname{equation}{}{}
\crefname{enumi}{}{}
\newcommand{\N}{\mathbb{N}}
\newcommand{\R}{\mathbb{R}}
\newcommand{\cone}{\mathbin{\times\!\!\!\!\times}}
\newcommand{\mres}{\llcorner}
\newcommand{\eps}{\varepsilon}
\newcommand{\haus}{{\mathscr H}}
\newcommand{\mass}{\mathbb{M}}
\newcommand{\ener}{{\mathcal E}}
\newcommand{\one}{\mathbf{1}}
\newcommand{\dd}{\mathop{}\mathopen{}\mathrm{d}}
\DeclarePairedDelimiter\slice{\langle}{\rangle}
\DeclarePairedDelimiterX\islice[1]{\langle}{\rangle}{\!\left\langle#1\right\rangle\!}
\DeclarePairedDelimiter\abs{\lvert}{\rvert}
\DeclarePairedDelimiter\norm{\lVert}{\rVert}
\DeclarePairedDelimiter\set{\{}{\}}
\DeclarePairedDelimiter\rectcurr{\llbracket}{\rrbracket}
\DeclareMathOperator{\sign}{sign}
\DeclareMathOperator{\spn}{span}
\DeclareMathOperator{\tanspace}{Tan}
\DeclareMathOperator{\lipconst}{Lip}
\DeclareMathOperator{\dvg}{div}
\DeclareMathOperator{\img}{Img}
\DeclareMathOperator{\diam}{diam}
\newcommand{\tplan}[1]{\mathbf{#1}}
\newcommand{\lipone}{{\mathrm{Lip}_1}}
\newcommand{\measspace}{\mathscr{M}}
\newcommand{\tplanset}{\mathbf{TP}}
\newcommand{\otplanset}{\mathbf{OTP}}
\newcommand{\tpathset}{\mathit{TP}}
\newcommand{\otpathset}{\mathit{OTP}}
\newcommand{\weakstarto}{\xrightharpoonup{\star}}
\newcommand{\xstrongto}[2][]{\xrightarrow[#1]{#2}}
\newcommand{\xweakstarto}[1]{\xrightharpoonup[#1]{\star}}
\newcommand{\refabove}[2]{\stackrel{\mbox{\normalfont\tiny #1}}{#2}}
\newlength\oversetwidth
\newlength\underwidth
\newcommand\alignedrefabove[2]{
  \settowidth\oversetwidth{$\overset{#1}{#2}$}
  \settowidth\underwidth{$#2$}
  \setlength\oversetwidth{\oversetwidth-\underwidth}
  \hspace{.5\oversetwidth}
  &\,
  \settowidth\oversetwidth{$\overset{#1}{#2}$}
  \settowidth\underwidth{$#2$}
  \setlength\oversetwidth{\oversetwidth-\underwidth}
  \hspace{-.5\oversetwidth}
  \overset{\mbox{\normalfont\tiny #1}}{#2}
}
\newenvironment{innerproof}
 {\proof}
 {\endproof}
\numberwithin{equation}{section}
\declaretheorem[name=Theorem,within=section]{thm}
\declaretheorem[name=Lemma,numberlike=thm]{lem}
\declaretheorem[name=Proposition,numberlike=thm]{prp}
\declaretheorem[name=Definition,numberlike=thm,style=definition]{dfn}
\declaretheorem[name=Remark,numberlike=thm,style=remark]{rmk}
\title{Stability of optimal traffic plans in the irrigation problem}
\author{Maria Colombo, Antonio De Rosa, Andrea Marchese,\\ Paul Pegon, Antoine Prouff}
\date\today
\begin{document}

\maketitle

\begin{abstract}
We prove the stability of optimal traffic plans in branched transport. In particular, we show that any limit of optimal traffic plans is optimal as well. This is the Lagrangian counterpart of the recent Eulerian version proved in \cite{ColDeRMar19-2}.
\end{abstract}

\section{Introduction}\label{sec:intro}
Given two nonnegative and finite Borel measures $\mu^-,\mu^+$ on $\R^d$ of equal total mass, the irrigation problem consists in connecting $\mu^-$ to $\mu^+$ with minimal cost, where in branched transport the displacement is performed on a $1$-dimensional network and the transport cost for a collection of particles of total mass $m$ travelling a distance $\ell$ along a common stretch is proportional to $\ell \times m^\alpha$, for a fixed parameter $\alpha \in (0,1)$. This problem may be cast in two main statical frameworks: an Eulerian one \cite{Xia03}, based on vector valued measures (more precisely normal $1$-currents) called \emph{transport paths}, and a Lagrangian one \cite{MadMorSol03, BerCasMor05}, based on positive measures on a set of curves (or trajectories) called \emph{traffic plans}. We refer to the book \cite{BCM} for the general theory of branched transport, and to the first sections of the more recent works \cite{Peg17,ColDeRMar18,ColDeRMar19-1} and the references therein.\\

In this paper, we tackle the question of the \emph{stability} in the Lagrangian framework: if $\{\mu_n^-\}_{n \in \N}$ and $\{\mu_n^+\}_{n \in \N}$ converge respectively to $\mu^-$ and $\mu^+$, and if $\{\tplan P_n\}_{n \in \N}$ is a sequence of optimal traffic plans for the marginals $(\mu_n^-,\mu_n^+)$, converging to a traffic plan $\tplan P$, is it true that $\tplan P$ is optimal for $(\mu^-,\mu^+)$? The positive answer is classically known above the critical threshold $\alpha > 1 - 1/d$ both for the Lagrangian and the Eulerian formulation. A positive answer for every $\alpha \in (0,1)$ has been recently given for the Eulerian formulation in \cite{ColDeRMar19-2}. Although the Eulerian and Lagrangian problems are essentially equivalent (see \cite{PaoSte06,Peg17}), the Eulerian viewpoint carries less information than the Lagrangian one, and the Lagrangian stability is not a straightforward consequence of the Eulerian one.

\paragraph{Main result}

Denote by $\otplanset(\mu^-,\mu^+)$ the set of optimal traffic plans with marginals $(\mu^-,\mu^+)$. Modulo some technical assumptions (necessary to the validity of the statement), we prove the following result. See \Cref{mainthm} for the correct statement.
\begin{thm}[Short statement]\label{mainthmrough}
Let $\mu_n^\pm \weakstarto \mu^\pm$ and let $\tplan P_n \in \otplanset(\mu_n^-, \mu_n^+)$ and assume that $\tplan{P}_n$ converges to $\tplan{P}$. Then, up to mild technical assumptions, $\tplan P \in \otplanset(\mu^-, \mu^+)$.
\end{thm}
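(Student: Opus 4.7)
I would adopt the standard $\Gamma$-convergence style liminf/recovery scheme. The plan reduces to two ingredients: first, lower semi-continuity of the Lagrangian cost $\ener$ under the chosen convergence of traffic plans; second, for every competitor $\tplan Q$ with marginals $(\mu^-,\mu^+)$ and finite cost, the existence of a recovery sequence $\tplan Q_n$ with marginals $(\mu_n^-,\mu_n^+)$ such that $\limsup_n \ener(\tplan Q_n) \leq \ener(\tplan Q)$. Combined with the optimality of the $\tplan P_n$, these two ingredients yield
\[
\ener(\tplan P) \leq \liminf_n \ener(\tplan P_n) \leq \liminf_n \ener(\tplan Q_n) \leq \ener(\tplan Q),
\]
proving the optimality of $\tplan P$. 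That $\tplan P$ has marginals $(\mu^-,\mu^+)$ should follow directly from the continuity of the two endpoint projection maps under the chosen convergence of measures on the path space.

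\textbf{Lower semi-continuity.} Writing the Lagrangian cost along trajectories as $\ener(\tplan P) = \int \int_0^{T(\gamma)} \theta_{\tplan P}(\gamma(t))^{\alpha-1}\, |\dot\gamma(t)|\,dt\,d\tplan P(\gamma)$ with multiplicity $\theta_{\tplan P}(x) = \tplan P(\set{\gamma : x \in \img \gamma})$, the LSC is a classical result in the Lagrangian framework (cf.\ \cite{BCM}) and essentially reduces to upper semi-continuity of $\tplan P \mapsto \theta_{\tplan P}$ under narrow convergence, together with Fatou's lemma and the fact that the exponent $\alpha-1$ is negative.

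\textbf{Recovery sequence.} To produce $\tplan Q_n$, I would glue to $\tplan Q$ two short correction traffic plans: a plan $\tplan R_n^-$ sending $\mu_n^-$ to $\mu^-$ at the start, and a plan $\tplan R_n^+$ sending $\mu^+$ to $\mu_n^+$ at the end, so that the concatenation $\tplan Q_n := \tplan R_n^- \# \tplan Q \# \tplan R_n^+$ has marginals $(\mu_n^-,\mu_n^+)$. By subadditivity of $\ener$ under concatenation,
\[
\ener(\tplan Q_n) \leq \ener(\tplan R_n^-) + \ener(\tplan Q) + \ener(\tplan R_n^+),
\]
so it suffices to construct $\tplan R_n^\pm$ with vanishing cost. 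This is precisely where the technical assumptions of \Cref{mainthm} must intervene: below the critical threshold $\alpha \leq 1 - 1/d$, the $\alpha$-branched cost between two weakly-$\ast$ close measures can be infinite or fail to vanish, so one needs additional structural control on the sequences (e.g.\ uniform bounds on $\ener(\tplan P_n)$, atomicity with a controlled number of atoms, or convergence in a stronger $d_\alpha$-like metric) to guarantee $\ener(\tplan R_n^\pm) \to 0$. I expect this construction to be the core technical effort of the proof.

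\textbf{Lagrangian vs.\ Eulerian subtlety.} A separate delicate point, and the reason the Eulerian stability of \cite{ColDeRMar19-2} does not immediately yield its Lagrangian counterpart, is that two traffic plans projecting to the same Eulerian current may carry different Lagrangian costs: the Lagrangian cost dominates the Eulerian one, with equality precisely for plans without loops or cancellations. Thus, before concluding, one must verify that the limit plan $\tplan P$ is itself simple in the appropriate sense, so that its Lagrangian cost coincides with the Eulerian cost of the induced current. I would handle this by showing that simplicity of the $\tplan P_n$ (a consequence of their optimality) is inherited by $\tplan P$ along the convergence, possibly after discarding a $\tplan P$-negligible set of pathological curves; this inheritance step, together with the vanishing of the correction energies in the previous paragraph, is where I anticipate the main technical obstacles to lie.
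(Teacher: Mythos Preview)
Your recovery-sequence scheme is exactly the approach that was already known to work above the critical threshold $\alpha > 1 - 1/d$, and it is also exactly where the difficulty lies below it. The technical assumptions actually present in \Cref{mainthm} are only: mutual singularity of $\mu^-,\mu^+$ and a uniform bound on $\ener^\alpha(\tplan P_n) + \int T_\infty\,d\tplan P_n$. None of these imply that $d_\alpha(\mu_n^\pm,\mu^\pm)\to 0$; for $\alpha \le 1-1/d$ one can have $\mu_n^\pm \weakstarto \mu^\pm$ with $d_\alpha(\mu_n^\pm,\mu^\pm)=+\infty$ for every $n$ (atomic approximations of a diffuse measure), and the uniform energy bound on $\tplan P_n$ gives no control on this quantity. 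So the correction plans $\tplan R_n^\pm$ with vanishing cost simply do not exist in general under the stated hypotheses, and the $\limsup$ inequality fails. This is not a technicality to be patched---it is the whole obstruction the paper is written to overcome.

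The paper's route is entirely different and does not use lower semicontinuity plus recovery at all. It takes the Eulerian stability of \cite{ColDeRMar19-2} as a black box: the induced currents $T_n=T_{\tplan P_n}$ converge to $T=T_{\tplan P}$, so $T$ is an optimal transport path and $\mass^\alpha(T)$ equals the minimal cost. The entire content of the paper is then the step you relegate to your last paragraph: proving that $\ener^\alpha(\tplan P)=\mass^\alpha(T)$, i.e.\ that $\tplan P$ is a good decomposition of $T$ with no cancellations. This is \emph{not} done by showing that simplicity is inherited along the convergence (it is not, in any direct sense). Instead the argument is by contradiction: if $\tplan P$ had cancellations, one extracts two points $x_0,x$ through which a positive $\tplan P$-mass of curves passes in both orders (\Cref{thm:cycles}); by weak convergence this forces ``quasi-cycles'' in $\tplan P_n$ for large $n$ (\Cref{lem:quasicycles}); and then one builds by hand a strictly better competitor for $\tplan P_n$ by excising those quasi-cycles (\Cref{prop:comp}), contradicting the optimality of $\tplan P_n$. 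So the construction of a competitor does occur, but for $\tplan P_n$ rather than for $\tplan P$, and it exploits the cycle structure rather than a generic gluing.
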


\paragraph{Strategy of the proof}
Our proof relies on the general stability result proved for the Eulerian setting in \cite[Theorem~1.1]{ColDeRMar19-2}. The classical way to associate to a Lagrangian traffic plan $\tplan P$ an Eulerian transport path $T=T_{\tplan P}$ consists in integrating (w.r.t. $\tplan P$) the obvious vector measures associated to the curves supporting $\tplan P$. The two suitably defined notions of transportation cost coincide on optimizers.

Taking $\tplan P$, $\{\tplan P_n\}_{n\in\N}$ as in \Cref{mainthmrough} we consider the induced transport paths $T$, $\{T_n\}_{n\in\N}$. One can easily show that $T$ and $\{T_n\}_{n\in\N}$ satisfy the hypotheses of \cite[Theorem~1.1]{ColDeRMar19-2}, so that $T$ is an optimal transport path for the marginals $(\mu^-,\mu^+)$. Nevertheless in principle it could happen that the cost of $T$ as a transport path and the cost of $\tplan P$ as a traffic plan do not coincide. This possibility can be attributed only to a specific phenomenon: some curves of $\tplan P$ partially overlap with opposite orientations, thus producing \emph{cancellations} at the level of vector measures. Most of our work consists in excluding the occurrence of such phenomenon.\\

The article closely follows the structure of the proof. After setting the notation, main definitions and preliminary results in \Cref{sec:prelim}, we argue by contradiction assuming that $\tplan P$ produces cancellations at the Eulerian level. \Cref{sec:canceltocycles} provides existence of \enquote{many Lagrangian cycles} in $\tplan P$, i.e. many pairs of distinct points $(x,y)$ such that both the family of those trajectories crossing $x$ after $y$ and those crossing $y$ after $x$ have positive measure according to $\tplan P$. From this, we deduce in \Cref{sec:cyclesandquasi} the existence of \enquote{quasi-cycles} in the $\tplan P_n$'s, roughly saying that for any such pair $(x,y)$ a certain amount of trajectories passes arbitrarily close to $x$ and $y$ in both orders, for $n$ large enough. In \Cref{sec:mainproof} we show that this leads to a contradiction by constructing a better competitor for $\tplan P_n$, removing portions of such trajectories, thus completing the proof of \Cref{mainthmrough}.

\section{Preliminaries}\label{sec:prelim}

In this section, we gather some definitions and basic facts that will be used throughout the paper. The notation is mostly consistent with \cite{ColDeRMar19-1}. 

\subsection{Background, notation, and main result}
We denote by $\abs{x}$ the Euclidean norm of $x\in \R^d$ and by $B_r(x), \bar B_r(x)$ respectively the open and the closed ball with center $x$ and radius $r$.
From now on we fix, $\alpha \in (0,1)$, $R > 0$ and $X := \bar B_R(0) \subseteq \R^d$. 
Except for the obvious cases, the measures that we consider are always Radon measures. Here is a list of notation used throughout the paper:
\begin{itemize}\leftskip 1.8 cm\labelsep=.6 cm
\setlength{\itemsep}{0pt}
\item[$\one_A$]  indicator function of a set $A$ valued in $\{0,1\}$
\item[$d(x,A)$]  $\coloneqq \inf_{y\in A} \abs{x-y}$, distance between the point $x$ and the set $A$
\item[$\measspace^k(Y)$]  space of finite (signed or vector) Borel measures on $Y$ valued in $\R^k$
\item[$\measspace^1_+(Y)$]  set of nonnegative finite Borel measures on $Y$
\item[$\mu_n \weakstarto \mu$]  weak-$\star$ convergence of measures in the duality between $C^0(Y,\R^k)$ and $\measspace^k(Y)$ when $Y$ is compact, i.e. $\int f \dd\mu_n \to \int f\dd\mu$ for every $f\in C^0(Y,\R^k)$
\item[$\mu \mres A$]  $\coloneqq \one_A \mu$, restriction of the measure $\mu$ to the subset $A$
\item[$f_\sharp \mu$]  push-forward of the measure $\mu$ on $Y$ by the map $f : Y \to Y'$, i.e. $f_\sharp\mu(A) \coloneqq \mu(f^{-1}(A))$
\item[$f\mu$]  (vector) measure defined by $[f\mu](A) \coloneqq \int_A f\dd\mu$ for every Borel set $A$, when $\mu$ is a nonnegative Borel measure and $f$ a Borel (vector-valued) map such that $\int \abs{f} \dd\mu < +\infty$
\item[$\mass(\mu)$]  mass of the measure $\mu$
\item[$\mass^\alpha(\mu)$]  $\coloneqq \sum_{x\in Y} |\mu(\{x\})|^\alpha$ when $\alpha \in [0,1)$ and $\mu \in \measspace^1(Y)$ is atomic, set to $+\infty$ if $\mu$ is not atomic
\item[$\mu \leq \nu$]  means that $\mu(A) \leq \nu(A)$ for all Borel set $A$
\item[$\norm{f}_\infty$]  $\coloneqq \sup_{x\in Y} \abs{f(x)}$ supremum norm of $f : Y \to \R^k$
\item[$\haus^k$]  $k$-dimensional Hausdorff measure
\item[$\haus^k_\delta$] $k$-dimensional Hausdorff pre-measures (see \cite[Definition 2.1]{EG})
\item[$\lipone$]  set of $1$-Lipschitz curves $\gamma : \R_+ \to X$, endowed with the (compact and metrizable) topology of uniform convergence on compact subsets of $\R_+$
\item[$\img \gamma$]  image $\gamma(I)$ of a curve $\gamma : I\subseteq \R \to \R^d$
\item[$T_\infty(\gamma)$]  $\coloneqq \inf \{t \in \R_+ : \gamma \text{ is constant on } [t,+\infty)\} \in [0,\infty]$, stopping time of $\gamma$
\item[$\gamma_{|[a,b]}$]  restriction of $\gamma\in \lipone$ to an interval $[a,b] \subseteq \R_+$ defined by $t\mapsto\gamma(t + a)$ for $t \in [0, b - a]$ and $t\mapsto\gamma(b)$ for $t \ge b - a$
\item[$e_0,e_\infty$]  evaluation maps $\gamma \mapsto \gamma(0)$ and $\gamma \mapsto \gamma(\infty) \coloneqq \lim_{t\to+\infty}\gamma(t)$ when $\gamma$ has finite length
\item[$\tanspace(x,E)$]  tangent space line at point $x$ to $E$ when $E$ is $1$-rectifiable, i.e. it is contained in a countable union of images of Lipschitz curves up to an $\haus^1$-null set; it is $\haus^1$-a.e. defined on $E$ (see \cite[Definition 2.86]{afp}).
\end{itemize}


A \emph{traffic plan} $\tplan P$ is a measure in $\measspace^1_+(\lipone)$ such that $\int_\lipone T_\infty \dd\tplan P < \infty$. If there exists a $1$-rectifiable set $E$ such that
\begin{equation}\label{eq:defrectif}
\haus^1(\img \gamma \setminus E) = 0\text{ for $\tplan P$-almost every }\gamma \in \lipone,
\end{equation}
then $\tplan P$ is said rectifiable. We list the main objects that we need regarding traffic plans:
\begin{itemize}\leftskip 1.8 cm\labelsep=.6 cm
\setlength{\itemsep}{0pt}
\item[$\tplanset$]  space of traffic plans 
\item[$\tplanset(\mu^-,\mu^+)$]  set of traffic plans $\tplan P$ such that $(e_0)_\sharp \tplan P = \mu^-$, $(e_\infty)_\sharp \tplan P = \mu^+$
\item[$\tplan P_n \weakstarto \tplan P$]  weak-$\star$ convergence in $\measspace^1(\lipone)$
\item[$\theta_{\tplan P}(x)$]  $\coloneqq \tplan P(\{\gamma \in \lipone : x\in \img \gamma\})$, multiplicity at $x$ w.r.t. $\tplan P$
\item[$\Theta_{\tplan P}(x)$]  $\coloneqq \int_\lipone \haus^0(\gamma^{-1}(x))\dd\tplan P$, full multiplicity at $x$ w.r.t. $\tplan P$
\item[$\ener^\alpha(\tplan P)$]  $\coloneqq \int_\lipone \int_{\R_+} \theta_\tplan P(\gamma(t))^{\alpha - 1} |\gamma'(t)| \dd{t} \dd{\tplan P}(\gamma)$, $\alpha$-energy of $\tplan P$
\item[$\otplanset(\mu^-,\mu^+)$]  set of optimal traffic plans with marginals $(\mu^-,\mu^+)$, that is $\ener^\alpha(\tplan P) < +\infty$ and $\ener^\alpha(\tplan P) \leq \ener^\alpha(\tplan Q)$ for every $\tplan Q \in \tplanset(\mu^-,\mu^+)$
\item[$\Sigma_{\tplan P}$]  $\coloneqq \{x : \theta_{\tplan P}(x) > 0\}$ network associated with $\tplan P$; it is $1$-rectifiable (by \cite[Section 2.1]{Peg17} or \cite[Lemma~6.3]{BerCasMor05}), and when $\tplan P$ is rectifiable then \cref{eq:defrectif} holds with $E= \Sigma_{\tplan P}$ .
\end{itemize}

We can now state the correct version of \Cref{mainthmrough}.

\begin{thm}[Stability of optimal traffic plans in the irrigation problem] \label{mainthm}
Let $\alpha \in (0, 1)$, $\mu^-$, $\mu^+$ be mutually singular positive finite measures on $\bar{B}_R(0) \subseteq \R^d$, $R > 0$, satisfying $\mu^-(\R^d) = \mu^+(\R^d)$. Let $\{\mu_n^-\}_{n \in \N}$ and $\{\mu_n^+\}_{n \in \N}$ be sequences of positive finite measures on $\bar{B}_R(0)$ such that $\mu_n^-(\R^d) = \mu_n^+(\R^d)$ for every $n \in \N$ and
\[
\mu_n^\pm \weakstarto \mu^\pm,
\]
and assume there exist  $\tplan P_n \in \otplanset(\mu_n^-, \mu_n^+)$ satisfying
\[
\sup_{n \in \N} \left\{ \ener^\alpha(\tplan P_n) + \int_\lipone T_\infty(\gamma) \dd{\tplan P_n}(\gamma) \right\} < \infty,
\]
and
\[
\tplan{P}_n \xweakstarto{n\to\infty} \tplan{P},
\]
for some $\tplan P$.
Then $\tplan P\in\otplanset(\mu^-, \mu^+)$.
\end{thm}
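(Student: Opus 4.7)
The plan is to deduce the statement from the Eulerian stability result \cite[Theorem~1.1]{ColDeRMar19-2}. To any $\tplan Q \in \tplanset$ one associates the vector-valued Radon measure $T_{\tplan Q}$ (the \enquote{transport path}) obtained by integrating against $\tplan Q$ the canonical vector measures carried by the curves $\gamma$. The convergence $\tplan P_n \weakstarto \tplan P$ together with the uniform bound on $\int T_\infty \dd \tplan P_n$ first gives $T_{\tplan P_n} \weakstarto T_{\tplan P}$ in $\measspace^d(X)$, uniformly bounded Eulerian $\alpha$-costs, and preservation of Eulerian optimality of each $T_{\tplan P_n}$ between $(\mu_n^-, \mu_n^+)$. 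Applying the Eulerian stability theorem, $T_{\tplan P}$ is then an optimal Eulerian transport path between $(\mu^-, \mu^+)$.

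\textbf{The Eulerian--Lagrangian gap.} For every $\tplan Q \in \tplanset$ the Eulerian $\alpha$-cost of $T_{\tplan Q}$ is at most $\ener^\alpha(\tplan Q)$, with equality on Lagrangian optimizers. The inequality can be strict for the limit $\tplan P$, but only for one reason: a non-$\haus^1$-negligible portion of $\Sigma_{\tplan P}$ is traversed by curves of $\tplan P$ in \emph{both} orientations, producing a partial cancellation of the vector integrals. If one excludes this, the Eulerian $\alpha$-cost of $T_{\tplan P}$ equals $\ener^\alpha(\tplan P)$, and then for every $\tplan Q \in \tplanset(\mu^-, \mu^+)$
\[
\ener^\alpha(\tplan P) = \text{Eulerian cost of } T_{\tplan P} \le \text{Eulerian cost of } T_{\tplan Q} \le \ener^\alpha(\tplan Q),
\]
which proves optimality of $\tplan P$. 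The whole difficulty is therefore to exclude cancellations in $\tplan P$.

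\textbf{From cancellations to a contradiction.} Suppose cancellations do occur. The content of \Cref{sec:canceltocycles} would be to extract a combinatorial consequence: the existence of distinct $x, y \in \Sigma_{\tplan P}$ such that both the set of curves crossing $x$ before $y$ and its mirror counterpart carry positive $\tplan P$-mass, thereby forming \enquote{Lagrangian cycles}. \Cref{sec:cyclesandquasi} would transfer this information to the approximants by using that time-ordered visiting functionals of small neighborhoods of $x, y$ behave well under the weak-$\star$ convergence $\tplan P_n \weakstarto \tplan P$, yielding that for all large $n$ uniformly positive $\tplan P_n$-mass crosses near $x$ before near $y$ and a comparable symmetric mass does the reverse (\enquote{quasi-cycles}). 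Finally \Cref{sec:mainproof} would exploit the quasi-cycles to perform a surgery on $\tplan P_n$: match trajectories of opposite orientations, cut and swap their tails at the two passage times, and reconnect through short bridges. The resulting plan $\tplan P_n'$ has the same marginals as $\tplan P_n$ but strictly smaller $\alpha$-energy, because a definite positive length of arc has been discarded while the concavity of $m \mapsto m^\alpha$ keeps the bulk cost under control; this contradicts $\tplan P_n \in \otplanset(\mu_n^-, \mu_n^+)$.

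\textbf{Main obstacle.} The hard step is the final surgery: it must be defined measurably on $\tplan P_n$, preserve both marginals exactly, use bridging segments whose cost vanishes as the neighborhoods of $x$ and $y$ shrink, and produce an energy gain that is \emph{uniform in $n$} and strictly positive. Propagating quantitative information from the limiting Lagrangian cycles of $\tplan P$ down to the $\tplan P_n$'s --- strongly enough that the gluing, swapping and bridging can be carried out simultaneously --- is the core technical obstruction, since the weak limit has already destroyed, through the cancellations themselves, the very cycle information we need to exploit.
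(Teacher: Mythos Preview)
Your outline matches the paper's strategy almost exactly: reduce to the Eulerian stability result, identify cancellations in $\tplan P$ as the only obstruction, and run the chain \emph{cancellations $\Rightarrow$ Lagrangian cycles $\Rightarrow$ quasi-cycles in $\tplan P_n$ $\Rightarrow$ strictly better competitor}, contradicting optimality of $\tplan P_n$.

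Two points deserve correction. First, your claim that cancellations are the \emph{only} reason for a strict Eulerian--Lagrangian gap is not quite right: even with $|\vec\theta_{\tplan P}| = \Theta_{\tplan P}$ $\haus^1$-a.e., one can still have $\theta_{\tplan P} < \Theta_{\tplan P}$ (a non-simple curve revisiting a point several times with the \emph{same} orientation), which makes $\ener^\alpha(\tplan P) = \int \theta_{\tplan P}^{\alpha-1}\Theta_{\tplan P}\,\dd\haus^1 > \int \Theta_{\tplan P}^\alpha\,\dd\haus^1 = \mass^\alpha(T_{\tplan P})$. The paper handles this separately (its Step~4), using that $T_{\tplan P}$ is acyclic as an optimal transport path to force $\tplan P$-a.e.\ curve to be simple. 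Second, the surgery is not carried out as a Lagrangian \enquote{swap tails and bridge} on $\tplan P_n$; the paper instead removes the cycling portions of curves at the traffic-plan level and then repairs the resulting boundary defect on spheres $\partial B_\varepsilon(x)$, $\partial B_\varepsilon(y)$ by adding small cone currents, producing an \emph{Eulerian} competitor $\bar T \in \tpathset(\mu_n^-,\mu_n^+)$ (\Cref{prop:comp}). This sidesteps exactly the measurable-matching difficulty you flag as the main obstacle, at the price of comparing $\mass^\alpha(\bar T)$ to $\ener^\alpha(\tplan P_n)$ rather than staying purely Lagrangian; the equivalence of the two models closes the loop.
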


\subsection{Transport paths}

A \emph{transport path} $T$ over $X$ is a normal $1$-current, or equivalently a vector measure on $X$ whose distributional divergence is a signed measure. Let us summarize the classical notation for transport paths in the following table:
\begin{itemize}\leftskip 1.8 cm\labelsep=.6 cm
\setlength{\itemsep}{0pt}
\item[$\tpathset$]  space of transport paths 
\item[$\partial T$]  $\coloneqq - \dvg T$ where $\dvg T$ is the distributional divergence of $T$ on $\R^d$
\item[$\tpathset(\mu^-,\mu^+)$]  set of transport paths $T$ such that $\partial T = \mu^+- \mu^-$
\item[$T_n \weakstarto T$]  weak-$\star$ convergence in $\measspace^d(X)$
\item[$\rectcurr{E,\vec \theta}$]  $\coloneqq \vec \theta \haus^1\mres E$ when $E$ is $1$-rectifiable and $\vec\theta : E \to \R^d$ is such that $\vec \theta(x) \in \tanspace(x,E)$ for $\haus^1$-a.e. $x$, $\int_E \abs{\vec\theta}\dd\haus^1 <\infty$, and $\vec \theta \haus^1\mres E$ is normal ; transport paths of this form are called rectifiable
\item[$\mass^\alpha(T)$]  defined by $\int_E \abs{\vec\theta}^{\alpha}\dd\haus^1$ for $T = \rectcurr{E,\vec\theta}$, set to $+\infty$ if $T$ is not rectifiable
\item[$\otpathset(\mu^-,\mu^+)$]  set of optimal transport paths $T \in \tpathset(\mu^-,\mu^+)$, meaning that $\mass^\alpha(T) < +\infty$ and $\mass^\alpha(T) \leq \mass^\alpha(S)$ for every $S\in \tpathset(\mu^-,\mu^+)$
\item[$I_\gamma$]  transport path induced by the curve of finite length $\gamma \in\lipone$ and defined by $\langle I_\gamma,\omega \rangle \coloneqq \int_{\R_+} \omega(\gamma(t))\cdot \gamma'(t) \dd t$ for every $\omega \in C^\infty_c(X,\R^d)$ ; its boundary is $\partial I_\gamma = \delta_{\gamma(\infty)}-\delta_{\gamma(0)}$
\item[$T_{\tplan P}$]  $\coloneqq \int_\lipone I_\gamma \dd\tplan P(\gamma)$, transport path induced by $\tplan P$; its boundary is $\partial T_{\tplan P} = (e_\infty)_\sharp \tplan P -(e_0)_\sharp \tplan P$.
\end{itemize}

In the last definition, the integration should be intended in the following sense. Let $I$ be a finite measure space and for every $t\in I$ let $\mu_t$ be a measure on $\R^n$, possibly real- or vector-valued, such that 
 $t\mapsto \mu_t(E)$ is measurable for every Borel set $E$ in $\R^n$; 
%
the integral $\int_I \mathbb M(\mu_t) \dd t$ is finite.
%
Then we denote by $\int_I \mu_t \dd t$ the measure on 
$\R^n$ defined by
\begin{equation}
\label{int:meas}
{\textstyle \big[ \int_I \mu_t \dd t \big]}(E)
:= \int_I \mu_t(E) \dd t
\quad\text{for every Borel set $E$ in $\R^n$.}
\end{equation}

When $T = T_{\tplan P}$ we say that \emph{$\tplan P$ decomposes $T$}. Following \cite{ColDeRMar18}, a \emph{good decomposition}, first introduced by Smirnov (see \cite[Section 1.2]{smirnov}) for normal currents, is a decomposition where neither cycles nor cancellations occur:
\begin{dfn}[Good decomposition]\label{gooddecompo}
Let $T$ and $\tplan P$ be a transport path and traffic plan such that $T = T_{\tplan P}$. Then $\tplan{P}$ is said to be a good decomposition of $T$ if:
\begin{enumerate}[(A)]
\item $\tplan{P}$ is supported on nonconstant simple curves; \label{prop:A}
\item $\mass(T) = \int_\lipone \mass(I_\gamma) \dd{\tplan{P}}(\gamma)$; \label{prop:B}
\item $\mass(\partial T) = \int_\lipone \mass(\partial I_\gamma) \dd{\tplan{P}}(\gamma)=2\tplan{P}(\lipone)$. \label{prop:C}
\end{enumerate}
\end{dfn}
According to the Decomposition Theorem of Smirnov \cite[Theorem C]{smirnov} (see also \cite{San14} for a Dacorogna-Moser approach), any acyclic transport path, hence any optimal transport path, admits a good decomposition.\\

\subsection{On curves and rectifiability} \label{subsec:rect}

Here we collect some basic results about $1$-Lipschitz curves, $1$-rectifiable sets and rectifiable traffic plans.


\begin{dfn} Let $\gamma \in \lipone$ of finite length and $\tplan P \in \tplanset$ a rectifiable traffic plan. We say that:
\begin{itemize}
\item $x$ is a \emph{regular point} of $\gamma$ if $x \notin \{\gamma(0),\gamma(\infty)\}$,  $\tanspace(x,\img \gamma)$ exists, $\gamma^{-1}(x)$ is finite and for all preimage $t$ of $x$, $\gamma'(t)$ exists and spans $\tanspace(x,\img{\gamma})$. Notice that $\gamma'(t)$ might have different orientations for different preimages of $x$;
\item $x$ is a \emph{regular point} of $\tplan P$ if $\tanspace(x,\Sigma_{\tplan P})$ exists and if for $\tplan P$-a.e. curve $\gamma$, $x$ is a regular point of $\gamma$ such that $\tanspace(x,\Sigma_{\tplan P}) = \tanspace(x,\img \gamma)$.
\end{itemize}
\end{dfn}
\begin{rmk}\label{aeregul}
As a direct consequence of the Area formula (\cite[Section 3.3]{EG}) and the definition of the tangent space (see \cite[Section~2.11]{afp} or \cite[212--213]{Mat}), we get that $\haus^1$-a.e. $x\in \img \gamma$ is a regular point of $\gamma$. Using the rectifiability of $\tplan P$ and Fubini's Theorem, we immediately deduce that $\haus^1$-a.e. point $x\in \Sigma_{\tplan P}$ is regular for $\tplan P$. Indeed, denoting $f:\lipone \times X \to \mathbb R$ such that $f(\gamma,x)=0$ if $\gamma$ has finite length and $x$ is a regular point of $\gamma$ and $f(\gamma,x)=1$ otherwise, it holds
$$0=\int_{\lipone}\int_{X}f(\gamma,x)\dd\haus^1 \mres \Sigma_{\tplan P} \dd{\tplan P}=\int_{X}\int_{\lipone}f(\gamma,x) \dd{\tplan P}\dd\haus^1 \mres \Sigma_{\tplan P}.$$
\end{rmk}
Now at each regular point $x$ of $\gamma \in \lipone$, we define:
\begin{equation}
\vec m_\gamma(x) \coloneqq \sum_{t \in \gamma^{-1}(x)} \gamma'(t)/|\gamma'(t)|,
\end{equation}
and at each regular point $x$ of $\tplan P$:
\begin{equation}
\vec \theta_{\tplan P}(x) \coloneqq \int_\lipone \vec m_\gamma(x) \dd\tplan P(\gamma).
\end{equation}
Both are well-defined $\haus^1$-a.e. respectively on $\img \gamma$ and $\Sigma_{\tplan P}$, and set to $0$ outside. Notice that by definition $\vec m_\gamma(x) \in \tanspace(x,\img\gamma)$ (with integer norm) for $\haus^1$-a.e. $x\in \img \gamma$ and $\vec \theta_{\tplan P}(x) \in \tanspace(x,\Sigma_{\tplan P})$ for $\haus^1$-a.e. $x\in\Sigma_{\tplan P}$. A direct use of the Area Formula and Fubini's Theorem yields:
\begin{align}
I_\gamma &= \rectcurr{\img \gamma, \vec m_\gamma},&
T_{\tplan P} &= \rectcurr{\Sigma_{\tplan P}, \vec \theta_{\tplan P}}.
\end{align}
Following \cite[Definition 11.9]{Mat}, given a $1$-dimensional linear subspace $V \subseteq \R^d$, $x \in \R^d$, $s \in (0, 1)$ and $r \in [0, \infty]$, we define the two-sided cone:
\[
X(x, r, V, s) := \set{y \in \R^d : d(y - x, V) \le s |y - x|} \cap \bar B_r(x),
\]
and for any nonzero vector $v \in \R^d$, we define the one-sided cone:
\[
X_\pm(x, r, v, s) := X(x, r, \spn v, s) \cap \set{y \in \R^d : \pm v \cdot (y-x) \ge 0}.
\]

\begin{dfn}[Proper crossing] \label{def:regularity}
Consider a cone $X(x_0, r, V, s)$ and a curve $\gamma \in \lipone$ such that $\gamma(t_0) = x_0$, and $\gamma'(t_0)$ exists and spans $V$. We say that $\gamma$ \emph{crosses the cone properly} at time $t_0 \in (0,T_\infty(\gamma))$ if there exist $t_\textrm{in} < t_0 < t_\textrm{out}$ such that
\begin{enumerate}[(i)]
\item $\gamma([t_\textrm{in}, t_0]) \subseteq X_-(x_0, r, \gamma'(t_0), s)$ and $\gamma([t_0, t_\textrm{out}]) \subseteq X_+(x_0, r, \gamma'(t_0), s)$;
\item $\gamma(t_\textrm{in}), \gamma(t_\textrm{out}) \in \partial B_r(x_0)$;
\item $\gamma(s) \in B_r(x_0)$ for every $s \in (t_\textrm{in},t_\textrm{out})$.
\end{enumerate}
We say that $t_\textrm{in}$ and $t_\textrm{out}$ are entrance and exit times of $\gamma$ inside the cone.
\end{dfn}
Proper crossing holds around regular points of any Lipschitz curve, as stated below.
\begin{lem} \label{lem:derivative}
Let $\gamma \in \lipone$ be a curve of finite length and $x_0$ a regular point in $\img \gamma$. Take a preimage $t_0 \in \gamma^{-1}(x_0)$ and $s\in (0,1)$. Then
\begin{equation} \label{eq:radius}
r_0 := \sup \set{r \ge 0 : \text{$\gamma$ crosses the cone $X(\gamma(t_0), r, \spn \gamma'(t_0), s)$ properly at $t_0$}}
\end{equation}
is nonzero and for all $r \in (0, r_0)$, $\gamma$ crosses 
 $X(\gamma(t_0), r, \spn \gamma'(t_0), s)$ properly at $t_0$. 
\end{lem}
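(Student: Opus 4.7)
The plan is to combine differentiability of $\gamma$ at $t_0$ (which controls $\gamma$ inside a small ball around $x_0$) with the finiteness of $\gamma^{-1}(x_0)$ (which ensures that, for $r$ small, all times at which $\gamma$ visits $\bar B_r(x_0)$ concentrate in a neighborhood of $t_0$ where the differentiability estimate is effective). Set $v \coloneqq \gamma'(t_0)$, which is nonzero since it spans $\tanspace(x_0, \img \gamma)$, and $V \coloneqq \spn v$. Fix $\eps \in (0, s\abs{v}/(1+s))$ and use differentiability at $t_0$ to find $\delta > 0$ such that $\abs{\gamma(t_0+h) - x_0 - hv} \le \eps \abs{h}$ for $\abs{h} \le \delta$. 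A short computation then shows that both the cone inequality $d(y-x_0, V) \le s\abs{y - x_0}$ and the orientation condition $\pm v \cdot (y-x_0) \ge 0$ hold at $y = \gamma(t_0 + h)$ for $\pm h \in (0, \delta]$.

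Next, exploit the finiteness of $\gamma^{-1}(x_0) = \set{t_0, t_1, \dots, t_k}$ and the fact that $t_0 \in (0, T_\infty(\gamma))$ (since $x_0 \ne \gamma(0), \gamma(\infty)$) to pick $\eta \in (0, \delta]$ such that $[t_0 - \eta, t_0 + \eta] \subseteq (0, T_\infty(\gamma))$ and $(t_0 - \eta, t_0 + \eta) \cap \set{t_1, \dots, t_k} = \emptyset$. Continuity of $\gamma$ on the compact set $K \coloneqq [0, T_\infty(\gamma)] \setminus (t_0 - \eta, t_0 + \eta)$ yields $\rho_1 \coloneqq \min_{t \in K} \abs{\gamma(t) - x_0} > 0$, and the same bound propagates to $[T_\infty(\gamma), +\infty)$ where $\gamma$ is constant. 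For any $r < r_1 \coloneqq \min\set{\rho_1, \eta(\abs{v} - \eps)}$, the first-step estimate gives $\abs{\gamma(t_0 \pm \eta) - x_0} > r$, hence $\gamma^{-1}(\bar B_r(x_0)) \subseteq (t_0 - \eta, t_0 + \eta)$. Defining
\[
t_\textrm{in} \coloneqq \sup\set{t \le t_0 : \abs{\gamma(t) - x_0} = r}, \quad t_\textrm{out} \coloneqq \inf\set{t \ge t_0 : \abs{\gamma(t) - x_0} = r},
\]
the intermediate value theorem guarantees these times exist and lie in $(t_0 - \eta, t_0)$ and $(t_0, t_0 + \eta)$ respectively. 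Property (ii) of \Cref{def:regularity} is then immediate, property (iii) follows from the defining supremum/infimum combined with a further application of the intermediate value theorem (any interior excursion outside $\bar B_r(x_0)$ would create an additional crossing of the sphere before $t_\textrm{out}$ or after $t_\textrm{in}$), and property (i) follows from the first step since $[t_\textrm{in}, t_\textrm{out}] \subseteq (t_0 - \delta, t_0 + \delta)$ and $\gamma([t_\textrm{in}, t_\textrm{out}]) \subseteq \bar B_r(x_0)$. This proves $r_0 \ge r_1 > 0$.

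To obtain the statement for every $r \in (0, r_0)$, I would establish a downward monotonicity property: if $\gamma$ crosses $X(x_0, r', V, s)$ properly with times $t'_\textrm{in} < t_0 < t'_\textrm{out}$, then for every $r^* \in (0, r')$ the corresponding times defined as above for $r^*$ lie in $(t'_\textrm{in}, t'_\textrm{out})$ and the three defining properties are inherited from the proper crossing at $r'$ (the cone and orientation conditions restrict from $[t'_\textrm{in}, t'_\textrm{out}]$, and (iii) follows from the defining infimum/supremum as before). The main work, and the part requiring some care to state cleanly, is the second step above: one must choose $r$ small enough that $\gamma^{-1}(\bar B_r(x_0))$ near $t_0$ is a single interval contained in the range where the linear approximation from the first step is still valid. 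Once this is arranged, the monotonicity property is essentially formal.
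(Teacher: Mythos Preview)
Your overall approach matches the paper's: use differentiability at $t_0$ to trap $\gamma$ in the appropriate one-sided cones on a short time interval, then locate entrance/exit times on $\partial B_r(x_0)$ via the intermediate value theorem. Your treatment of the downward monotonicity in $r$ is in fact more explicit than the paper's, which simply asserts it is clear.

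There is, however, a genuine error in your second step. You choose $\eta$ so that the other preimages $t_1,\dots,t_k$ lie \emph{outside} $(t_0-\eta,t_0+\eta)$; consequently they lie \emph{inside} $K = [0,T_\infty(\gamma)]\setminus(t_0-\eta,t_0+\eta)$, and since $\gamma(t_j)=x_0$ you get $\rho_1 = \min_{t\in K}\abs{\gamma(t)-x_0}=0$ whenever $k\ge 1$. The conclusion $\gamma^{-1}(\bar B_r(x_0))\subseteq(t_0-\eta,t_0+\eta)$ is therefore false in general.

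Fortunately this step is superfluous. Proper crossing at $t_0$ says nothing about $\gamma$ outside $[t_\textrm{in},t_\textrm{out}]$, so you never need global control of $\gamma^{-1}(\bar B_r(x_0))$. From $r<\eta(\abs{v}-\eps)$ alone you already have $\abs{\gamma(t_0\pm\eta)-x_0}>r$, so the intermediate value theorem produces sphere hits in $(t_0-\eta,t_0)$ and $(t_0,t_0+\eta)$; your $\sup/\inf$ definitions then force $t_\textrm{in},t_\textrm{out}\in(t_0-\eta,t_0+\eta)\subseteq(t_0-\delta,t_0+\delta)$, and properties (i)--(iii) follow exactly as you wrote. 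Simply delete $\rho_1$ from the definition of $r_1$ (and the sentence claiming $\rho_1>0$), and your proof is correct. This is precisely what the paper does: it works entirely locally around $t_0$ and never invokes the finiteness of $\gamma^{-1}(x_0)$.
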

\begin{proof}
Since $\gamma$ is differentiable at $t_0$, as $\delta \to 0$ we have:
\begin{equation*}d(\gamma\left(t_0+\delta) - x_0, \spn \gamma'(t_0)\right) \le |\gamma(t_0+\delta) - x_0 - \delta\gamma'(t_0)| = o(\delta),
\end{equation*}
Hence for every $s \in (0,1)$ there exists $\delta_0>0$ such that
\begin{equation*}
d(\gamma\left(t_0 + \delta) - x_0, \spn \gamma'(t_0)\right) \leq s\abs{\gamma(t_0+\delta)-x_0},
\end{equation*}
whenever $|\delta|\leq\delta_0$.
Moreover, there exists $0 < \delta_1 \leq \delta_0$ such that for any $\delta \in [0, \delta_1]$, we have $\pm (\gamma(t_0\pm\delta)-x_0)\cdot \gamma'(t_0) \geq 0$. Hence, we obtain
\begin{equation}\label{eq:crosscone1}
\gamma([t_0-\delta_1, t_0]) \subseteq X_-(x_0, \infty, \gamma'(t), s) \text{ and } \gamma([t_0, t_0+\delta_1]) \subseteq X_+(x_0, \infty, \gamma'(t), s).
\end{equation}
Denote $r_0 = \min \{\abs{\gamma(t_0-\delta_1)-x_0},\abs{\gamma(t_0+\delta_1)-x_0}\}$. By continuity of $\gamma$ it follows
\begin{equation}\label{eq:crosscone2}
\gamma([t_0-\delta_1,t_0]) \cap \partial B_{r_0}(x_0) \neq \emptyset \text{ and } \gamma([t_0, t_0 +\delta_1]) \cap \partial B_{r_0}(x_0) \neq \emptyset.
\end{equation}
From \cref{eq:crosscone1,eq:crosscone2}, we obtain that $\gamma$ crosses the cone $X(x_0, r_0, \spn \gamma'(t_0), s)$ properly. It is clear that the same holds taking $r\leq r_0$.
\end{proof}

\subsection{Slicing traffic plans}

In this section we introduce a new tool, which is the Lagrangian counterpart to the slicing of currents. We refer to \cite{Sim} for a complete presentation of the latter. We begin by defining a localized version of the $\alpha$-energy. For any $\alpha \in [0, 1]$ and any Borel set $E \subseteq \R^d$, we set:
\begin{equation*}
\ener^\alpha({\tplan P}, E) := \int_\lipone \int_{\R_+} \theta_{\tplan P}^{\alpha - 1}(\gamma(t)) \one_{\gamma(t) \in E} |\gamma'(t)| \dd{t} \dd \tplan P(\gamma).
\end{equation*}
By the Area Formula and Fubini's Theorem, if ${\tplan P}$ is rectifiable then it can be expressed as:
\begin{equation*}
\ener^\alpha({\tplan P}, E) = \int_E \theta_{\tplan P}^{\alpha - 1} \Theta_{\tplan P} \dd{\haus^1}.
\end{equation*} 

\begin{prp} \label{prop:slice0}
Let $f : \R^d \to \R$ be a Lipschitz map, let ${\tplan P}$ be a traffic plan, and let $a < b$ be real numbers. Then:
\begin{equation}\label{e:slice_intensity}
\int_a^b \int_\lipone \haus^0((f \circ \gamma)^{-1}(\ell)) \dd \tplan P(\gamma) \dd{\ell} \le \lipconst (f) \, \ener^1({\tplan P}, f^{-1}([a, b])).
\end{equation}
\end{prp}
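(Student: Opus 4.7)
The plan is to reduce the inequality to a pointwise (in $\gamma$) statement on $\R_+$, where it follows from the one-dimensional area formula, and then integrate against $\tplan P$.

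First I would apply Fubini's theorem to the left-hand side of \cref{e:slice_intensity} (the integrand is nonnegative and jointly measurable), swapping the order of integration so that we obtain
\[
\int_\lipone \left( \int_a^b \haus^0((f\circ\gamma)^{-1}(\ell)) \dd\ell \right) \dd\tplan P(\gamma).
\]
For each fixed $\gamma\in\lipone$, the map $g \coloneqq f\circ\gamma \colon \R_+ \to \R$ is Lipschitz with $\lipconst(g) \le \lipconst(f)$, since $\gamma$ is $1$-Lipschitz. Hence $g$ is differentiable almost everywhere on $\R_+$, with $|g'(t)| \le \lipconst(f)\,|\gamma'(t)|$ at every point where both derivatives exist (which is almost every $t$, since $\gamma$ is Lipschitz and so is differentiable a.e.\ by Rademacher, and the chain rule applies wherever $\gamma'(t)$ exists and $f$ is differentiable at $\gamma(t)$; alternatively one can argue directly from the Lipschitz bound).

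Next I would invoke the Banach indicatrix / area formula for real-valued Lipschitz functions on the line: for any Borel set $A \subseteq \R_+$,
\[
\int_\R \haus^0(g^{-1}(\ell) \cap A) \dd\ell = \int_A |g'(t)| \dd t.
\]
Applying this with $A \coloneqq g^{-1}([a,b]) = \{t \in \R_+ : \gamma(t) \in f^{-1}([a,b])\}$, and noting that for $\ell \in [a,b]$ the fiber $g^{-1}(\ell)$ is automatically contained in $A$, I obtain
\[
\int_a^b \haus^0((f\circ\gamma)^{-1}(\ell)) \dd\ell = \int_A |g'(t)| \dd t \le \lipconst(f) \int_{\R_+} \one_{\gamma(t) \in f^{-1}([a,b])} |\gamma'(t)| \dd t.
\]

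Finally, integrating this inequality with respect to $\tplan P(\gamma)$ and recognizing the right-hand side as $\lipconst(f)\,\ener^1(\tplan P, f^{-1}([a,b]))$ (using that $\theta_{\tplan P}^{\alpha-1}$ reduces to $1$ when $\alpha=1$) yields the claimed bound. The only mild technical point is justifying the joint measurability needed for Fubini and for the outer integral in $\gamma$, which follows from standard measurable-selection-type arguments (e.g.\ $\haus^0((f\circ\gamma)^{-1}(\ell))$ can be expressed as a supremum over finite counts on rational subdivisions of $\R_+$, and is thus Borel in $(\gamma,\ell)$); I do not expect any serious obstacle here.
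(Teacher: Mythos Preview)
Your proof is correct and follows essentially the same approach as the paper: Fubini to swap the integrals, then the one-dimensional area formula applied to $f\circ\gamma$ on the set $(f\circ\gamma)^{-1}([a,b])$, followed by the Lipschitz bound $|(f\circ\gamma)'(t)|\le\lipconst(f)\,|\gamma'(t)|$ and integration against $\tplan P$. The paper's argument is virtually identical, only omitting the measurability remark you included.
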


\begin{proof}
Let us denote $g(\gamma,\ell):=\haus^0((f \circ \gamma)^{-1}(\ell))$ for every $(\gamma, \ell) \in \lipone\times [a,b]$.
By  Fubini's theorem and the Area Formula we compute 
\begin{align*}
\int_a^b \int_\lipone g(\gamma,\ell) \dd \tplan P(\gamma) \dd{\ell} &= \int_\lipone \int_a^b g(\gamma,\ell) \dd{\ell} \dd \tplan P(\gamma) \\
&= \int_\lipone \int_{(f \circ \gamma)^{-1}([a, b])} | (f \circ \gamma)'(t)| \dd{t} \dd \tplan P(\gamma) \\
&\le \lipconst (f) \int_\lipone \int_{\R_+} \one_{\gamma(t) \in f^{-1}([a, b])} |\gamma'(t)| \dd{t} \dd \tplan P(\gamma) \\
&= \lipconst(f)\, \ener^1({\tplan P}, f^{-1}([a, b])).\qedhere
\end{align*}
\end{proof}

\Cref{prop:slice0} allows to give the following definition:

\begin{dfn}[Slice and intensity of a slice]
Let $f : \R^d \to \R$ be a Lipschitz map, and let ${\tplan P}$ be a traffic plan. By \Cref{prop:slice0} the integrand in the left hand side of \Cref{e:slice_intensity} is finite for a.e. $\ell\in\R$. For such values of $\ell$ we denote by $\islice{\tplan P, f,\ell}$ the finite positive measure
$$\islice{\tplan P, f,\ell}  := \int_\lipone \sum_{t \in (f \circ \gamma)^{-1}(\ell)} \delta_{\gamma(t)} \dd \tplan P(\gamma),$$
where the integration is in the sense of \cref{int:meas}, and we call such measure the \emph{slice intensity} of $\tplan P$ with respect to $f$ at level $\ell$. When the slice intensity is defined, we denote by $\slice{\tplan P, f,\ell}$  the \emph{slice} of ${\tplan P}$ with respect to $f$ at level $\ell$, namely the real-valued measure
$$
\slice{\tplan P, f,\ell} := \int_\lipone \sum_{t \in (f \circ \gamma)^{-1}(\ell)} \sign \left((f \circ \gamma)'(t)\right) \delta_{\gamma(t)} \dd \tplan P(\gamma).
$$
\end{dfn}


\begin{prp} \label{prop:slice}
Let ${\tplan P}$ be a rectifiable traffic plan, and let $a < b$ be real numbers. Then:
$$\int_a^b \mass^\alpha(\islice{\tplan P, f,\ell}) \dd{\ell} \le \lipconst(f) \, \ener^\alpha({\tplan P}, f^{-1}([a, b])).$$
\end{prp}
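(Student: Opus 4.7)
The strategy is to reduce the computation of $\mass^\alpha(\islice{\tplan P, f,\ell})$ to a sum over a (for a.e.\ $\ell$, at most countable) subset of $\Sigma_{\tplan P}$, and then apply the coarea formula on the $1$-rectifiable network $\Sigma_{\tplan P}$ to express the double integral on the left-hand side as a weighted $\haus^1$-integral over $\Sigma_{\tplan P}\cap f^{-1}([a,b])$.

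The first step is to show that for a.e.\ $\ell\in\R$, the slice intensity $\islice{\tplan P, f,\ell}$ is purely atomic, supported on $\Sigma_{\tplan P}\cap f^{-1}(\ell)$, with $\islice{\tplan P, f,\ell}(\{x\}) = \Theta_{\tplan P}(x)$ whenever $f(x)=\ell$ (and zero otherwise). Support: by rectifiability of $\tplan P$, $\tplan P$-a.e.\ curve satisfies $\img\gamma\subseteq\Sigma_{\tplan P}$ up to $\haus^1$-null sets, so the slice is supported in $\Sigma_{\tplan P}\cap f^{-1}(\ell)$. Countability: since $\Sigma_{\tplan P}$ is $1$-rectifiable with $\haus^1(\Sigma_{\tplan P})<\infty$, the coarea formula on $\Sigma_{\tplan P}$ yields $\haus^0(\Sigma_{\tplan P}\cap f^{-1}(\ell))<\infty$ for a.e.\ $\ell$. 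The value at each atom follows directly from the defining formula of $\islice{\tplan P, f,\ell}$, since $\{t:\gamma(t)=x,\,f(\gamma(t))=\ell\}=\gamma^{-1}(x)$ as soon as $f(x)=\ell$.

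Consequently, for a.e.\ $\ell\in[a,b]$ we get $\mass^\alpha(\islice{\tplan P, f,\ell}) = \sum_{x\in\Sigma_{\tplan P}\cap f^{-1}(\ell)} \Theta_{\tplan P}(x)^\alpha$. Integrating over $[a,b]$ and applying the coarea formula on $\Sigma_{\tplan P}$ a second time, now with integrand $\Theta_{\tplan P}^\alpha\one_{f^{-1}([a,b])}$, rewrites the left-hand side as
\[
\int_{\Sigma_{\tplan P}\cap f^{-1}([a,b])} \Theta_{\tplan P}^\alpha\,|\nabla^{\Sigma_{\tplan P}} f|\dd\haus^1.
\]
To conclude, I would use two pointwise inequalities valid $\haus^1$-a.e.\ on $\Sigma_{\tplan P}$: firstly, $|\nabla^{\Sigma_{\tplan P}} f|\le\lipconst(f)$ (the tangential differential is a restriction of $df$), and secondly, $\Theta_{\tplan P}^\alpha \le \theta_{\tplan P}^{\alpha-1}\Theta_{\tplan P}$, which is equivalent to $\Theta_{\tplan P}\ge\theta_{\tplan P}$ (obvious, as full multiplicity counts preimages while $\theta_{\tplan P}$ counts curves) combined with $\alpha-1<0$. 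Plugging these into the displayed identity and recognizing $\int_{f^{-1}([a,b])} \theta_{\tplan P}^{\alpha-1}\Theta_{\tplan P}\dd\haus^1 = \ener^\alpha(\tplan P, f^{-1}([a,b]))$, via the rectifiable expression of the localized energy stated just before the proposition, yields the claim.

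The main obstacle, in my view, is the first step: rigorously verifying that for a.e.\ $\ell$ the slice intensity is atomic with the stated atom values, and justifying the required measurability in $\ell$ of $\mass^\alpha(\islice{\tplan P, f,\ell})$. Once this is in place, the argument is essentially a coarea-plus-monotonicity computation.
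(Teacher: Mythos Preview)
Your plan is correct and parallels the paper's proof closely. Both first identify, for a.e.\ $\ell$, the slice intensity as $\Theta_{\tplan P}\,\haus^0\mres(\Sigma_{\tplan P}\cap f^{-1}(\ell))$; the paper handles your ``main obstacle'' by a short Fubini/Area Formula computation showing $\haus^0(\img\gamma\cap f^{-1}(\ell)\setminus\Sigma_{\tplan P})=0$ for a.e.\ $\ell$ and $\tplan P$-a.e.\ $\gamma$. For the integration in $\ell$, the paper writes $\Theta_{\tplan P}(x)^\alpha=\Theta_{\tplan P}(x)^{\alpha-1}\int_\lipone\haus^0(\gamma^{-1}(x))\dd\tplan P(\gamma)$, swaps by Fubini, and applies the Area Formula to each $f\circ\gamma$, whereas you apply the coarea formula directly on the rectifiable set $\Sigma_{\tplan P}$ and then invoke the rectifiable expression of $\ener^\alpha(\tplan P,E)$; both variants land on the same bound via $\Theta_{\tplan P}\ge\theta_{\tplan P}$ and $\alpha-1<0$. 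One small correction: $\haus^1(\Sigma_{\tplan P})<\infty$ is not assumed and need not hold; you only have $\haus^1$-$\sigma$-finiteness (from $\int_{\Sigma_{\tplan P}}\theta_{\tplan P}\dd\haus^1<\infty$), which is enough for the coarea formula, and correspondingly the fibers $\Sigma_{\tplan P}\cap f^{-1}(\ell)$ are only guaranteed to be at most countable for a.e.\ $\ell$, not $\haus^0$-finite---but countability is all you need for atomicity.
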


\begin{proof}
The network $\Sigma_{\tplan P}$ is a $1$-rectifiable set. So we know (see \cite[Chapter~3, Remark~2.10]{Sim}) that for almost every $\ell$, $\Sigma_{\tplan P} \cap \{ f = \ell \}$ is a $0$-rectifiable set, that is to say it is at most countable. In addition, it is true that for almost every $\ell$:
\begin{equation} \label{eq:sig}
\textrm{for ${\tplan P}$-almost every $\gamma \in \lipone$, $\haus^0(\img \gamma \cap \{f = \ell\} \setminus \Sigma_{\tplan P}) = 0$,}
\end{equation}
i.e. $\img \gamma \cap \{f = \ell\} \subseteq \Sigma_{\tplan P}$. Indeed, by Fubini's Theorem and the Area Formula:
\begin{multline*}
\int_\R \int_\lipone \haus^0(\img \gamma \cap \{f = \ell\} \setminus \Sigma_{\tplan P}) \dd \tplan P(\gamma) \dd{\ell}\\
\begin{aligned}
&= \int_\lipone \int_\R \int_{(f \circ \gamma)^{-1}(\ell)} \one_{\gamma(t) \not\in \Sigma_{\tplan P}} \dd{\haus^0}(t) \dd{\ell} \dd \tplan P(\gamma) \\
&\leq \int_\lipone \int_{\R_+} |(f \circ \gamma)'(t)| \one_{\gamma(t) \not\in \Sigma_{\tplan P}} \dd{t} \dd \tplan P(\gamma) \\
&\leq \lipconst (f) \int_\lipone \int_{\img \gamma \setminus \Sigma_{\tplan P}} \haus^0\left(\gamma^{-1}(x)\right) \dd{\haus^1}(x) \dd \tplan P(\gamma),
\end{aligned}
\end{multline*}
which equals $0$ since ${\tplan P}$ is assumed to be rectifiable.

Now let $\varphi \in C_c^0(\R^d)$ be nonnegative. Let $\ell$ be such that \cref{eq:sig} is true. Then
\begin{align*}
\langle \islice{\tplan P, f,\ell}, \varphi \rangle &= \int_\lipone \sum_{t \in (f \circ \gamma)^{-1}(\ell)} \varphi(\gamma(t)) \dd \tplan P(\gamma) \\
&= \int_\lipone \sum_{x \in f^{-1}(\ell) \cap \Sigma_{\tplan P}} \haus^0\left(\gamma^{-1}(x)\right) \varphi(x) \dd \tplan P(\gamma) \\
&= \sum_{x \in f^{-1}(\ell) \cap \Sigma_{\tplan P}} \varphi(x) \int_\lipone \haus^0\left(\gamma^{-1}(x)\right) \dd \tplan P(\gamma) ,
\end{align*}
using Fubini's Theorem for the last equality. Thus
\[\islice{\tplan P, f,\ell} = \Theta_{\tplan P} \haus^0 \mres (f^{-1}(\ell) \cap \Sigma_{\tplan P}),\]
and $\mass^\alpha(\islice{\tplan P, f,\ell}) = \sum_{x \in f^{-1}(\ell) \cap \Sigma_{\tplan P}} \Theta_{\tplan P}(x)^\alpha$.
Finally, by Fubini's Theorem and the Area Formula again, we compute
\begin{align*}
\int_a^b \mass^\alpha(\islice{\tplan P, f,\ell}) \dd\ell &= \int_a^b \sum_{x \in f^{-1}(\ell) \cap \Sigma_{\tplan P}} \Theta_{\tplan P}(x)^{\alpha - 1} \Theta_{\tplan P}(x) \dd{\ell} \\
&= \int_a^b \sum_{x \in f^{-1}(\ell) \cap \Sigma_{\tplan P}} \Theta_{\tplan P}(x)^{\alpha - 1} \int_\lipone \haus^0\left(\gamma^{-1}(x)\right) \dd \tplan P(\gamma) \dd{\ell} \\
&= \int_\lipone \int_a^b \sum_{t \in (f \circ \gamma)^{-1}(\ell)} \one_{\gamma(t) \in \Sigma_{\tplan P}} \Theta_{\tplan P}\left(\gamma(t)\right)^{\alpha - 1} \dd{\ell} \dd \tplan P(\gamma) \\
&= \int_\lipone \int_{\R_+} \one_{\gamma(t) \in \Sigma_{\tplan P}, (f \circ \gamma)(t) \in [a, b]} \Theta_{\tplan P}\left(\gamma(t)\right)^{\alpha - 1} |(f \circ \gamma)'(t)| \dd{t} \dd \tplan P(\gamma) \\
&\le \lipconst (f) \, \ener^\alpha({\tplan P}, f^{-1}([a, b])).\qedhere
\end{align*}
\end{proof}

\section{From cancellations to cycles}\label{sec:canceltocycles}

Take $\{\tplan P_n\}_{n\in \N}$ and $\tplan P$ satisfying the hypotheses of \Cref{mainthm} and set $T = T_{\tplan P}$. At first glance, ${\tplan P}$ could fail to be a good decomposition of $T$: in general a limit of good decompositions is not a good decomposition. In order to show that ${\tplan P}$ is in fact a good decomposition of $T$, we are going to prove that, as a limit of optimal traffic plans, it cannot produce cancellations at the Eulerian level. In the following we will always assume that $\tplan P_n$ and $\tplan P$ are rectifiable traffic plans, which is not restrictive in view of Theorem 4.10 of \cite{BCM}. 

\subsection{Cancellations}\label{sec:cancel}

Cancellations in $\tplan P$ mean \enquote{pieces of trajectories} that disappear in the induced current, due to positive amounts of curves going in opposite directions. Let us be more precise.

In general the density of the induced current $\vec \theta_{\tplan P}$ is less or equal than the full multiplicity $\Theta_{\tplan P}$, $\haus^1$-a.e. on $\Sigma_{\tplan P}$. Indeed, take a curve $\gamma \in \lipone$ such that $\haus^1(\img\gamma \setminus \Sigma_{\tplan P}) = 0$, and a regular point $x \in\img\gamma$ where $\tanspace(x,\Sigma_{\tplan P})$ exists. Note that by the triangle inequality:
\begin{equation} \label{eq:ti}
\abs{\vec m_\gamma(x)} \le \# \gamma^{-1}(x),
\end{equation}
hence taking a regular point $x$ of $\tplan P$, one has:
\begin{align}
\abs{\vec \theta_{\tplan P}(x)} = \abs*{\int_{\lipone} \vec m_\gamma(x) \dd\tplan P(\gamma)} &\leq \int_{\lipone} \abs{\vec m_\gamma(x)} \dd\tplan P(\gamma)\label{eq:strict1}\\
&\leq \int_{\lipone} \# \gamma^{-1}(x) \dd\tplan P(\gamma) = \Theta_{\tplan P}(x).\label{eq:strict2}
\end{align}
\begin{rmk}\label{ineq_good_decompo}
These inequalities and their equality case are closely related to the notion of good decomposition. Indeed, notice that using Fubini's Theorem, \cref{eq:strict1} is an equality $\haus^1$-a.e. if and only if
\begin{align*}
\mass(T) = \int_\lipone \int_{\Sigma_{\tplan P}} \abs{\vec m_\gamma(x)} \dd\haus^1(x) \dd\tplan P(\gamma) = \int_\lipone \mass(I_\gamma) \dd\tplan P(\gamma),
\end{align*}
that is if \cref{prop:B} of \Cref{gooddecompo} holds. Moreover \cref{prop:A} implies equality $\haus^1$-a.e. in \cref{eq:strict2} as well as $\theta_{\tplan P}(x) = \Theta_{\tplan P}(x)$.
\end{rmk}
We say that $\tplan P$ has cancellations if we have a strict inequality:
\begin{equation}\label{eq:cancel}
\abs{\vec \theta_{\tplan P}(x)} < \Theta_{\tplan P}(x),
\end{equation}
on a subset of $\Sigma_{\tplan P}$ of positive $\haus^1$-measure. Notice that \cref{eq:cancel} happens if either inequality \cref{eq:strict1} or \cref{eq:strict2} is strict. Inequality \cref{eq:strict2} is strict if there exists a positive amount of curves $\gamma$ such that equality \cref{eq:ti} is strict, and since $\gamma'(t)$ belongs to $\tanspace(x,\Sigma_{\tplan P})$ for all $t\in \gamma^{-1}(x)$ by \Cref{aeregul}, it means that $\gamma$ crosses $x$ at least twice in opposite directions: heuristically these cancellations are due to those particles flowing through the same point at least twice, with different orientations. Inequality \cref{eq:strict1} is strict if there are two sets of curves $A_x, B_x \in \lipone$ of positive measure such that the resultant tangent $\vec m_\gamma(x)$ belongs to $\tanspace(x,\Sigma_{\tplan P})$ with a certain orientation on $A_x$ and with the opposite orientation on $B_x$: heuristically these cancellations are due to the interactions between different particles flowing in opposite directions.

To capture both situations at the same time, we choose once and for all a (Borel measurable) orientation $\tau_{\Sigma_{\tplan P}}(x)$ for $\tanspace(x,\Sigma_{\tplan P})$, and introduce for every $x \in \R^d$ the sets:
\begin{gather}
\Gamma^\pm(x) := \set{\gamma \in \lipone : \exists t \in (0,T(\gamma)) \text{ s.t. } \gamma'(t)/|\gamma'(t)| = \pm \tau_{\Sigma_{\tplan P}}(x)},\\
\shortintertext{as well as the corresponding multiplicities}
\theta_{\tplan P}^\pm(x) := \tplan P \left( \Gamma^\pm(x) \right),
\end{gather}
\begin{equation}\label{eq:bartheta}
\bar\theta_{\tplan P}(x) := \min\{\theta_{\tplan P}^+(x), \theta_{\tplan P}^-(x)\}.
\end{equation}
We may characterize cancellations at $x$ as stated in the following lemma:
\begin{lem}\label{cancelcharac}
Take a regular point $x$ of $\tplan P$. The following assertions are equivalent:
\begin{enumerate}
\item $\abs{\vec \theta_{\tplan P}(x)} = \Theta_{\tplan P}(x)$,
\item $\bar\theta_{\tplan P}(x) = 0$,
\item there exists $s\in \{-1,+1\}$ such that for $\tplan P$-a.e. curve and all $t \in \gamma^{-1}(x)$,
\[\gamma'(t) = s \abs{\gamma'(t)} \tau_{\Sigma_{\tplan P}}(x).\]
\end{enumerate}
\end{lem}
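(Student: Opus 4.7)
The plan is to reduce everything to the scalar triangle inequalities \eqref{eq:strict1}--\eqref{eq:strict2} and exploit the geometric constraint that at a regular point $x$ of $\tplan P$ every relevant tangent vector must lie on the oriented tangent line $\spn \tau_{\Sigma_{\tplan P}}(x)$.

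Since $x$ is regular for $\tplan P$, for $\tplan P$-a.e. curve $\gamma\in\lipone$ passing through $x$ and for every $t\in\gamma^{-1}(x)$, the derivative $\gamma'(t)$ exists and spans $\tanspace(x,\Sigma_{\tplan P})$, so $\gamma'(t)/|\gamma'(t)|\in\{+\tau_{\Sigma_{\tplan P}}(x),-\tau_{\Sigma_{\tplan P}}(x)\}$. First I would introduce the two integer-valued counters
\[
N^\pm(\gamma)\coloneqq \#\set[\big]{t\in\gamma^{-1}(x) : \gamma'(t)/|\gamma'(t)| = \pm \tau_{\Sigma_{\tplan P}}(x)},
\]
so that, for $\tplan P$-a.e. $\gamma$,
\[
\vec m_\gamma(x) = \bigl(N^+(\gamma)-N^-(\gamma)\bigr)\,\tau_{\Sigma_{\tplan P}}(x) \quad\text{and}\quad \#\gamma^{-1}(x) = N^+(\gamma)+N^-(\gamma).
\]

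For the equivalence (2)$\Leftrightarrow$(3), I would simply unwind the definitions. Note that (up to the condition requiring $\gamma(t)=x$) the sets $\Gamma^\pm(x)$ are precisely $\{\gamma:N^\pm(\gamma)\ge 1\}$, so $\theta_{\tplan P}^\pm(x)=\tplan P(\{N^\pm\ge 1\})$. If $\bar\theta_{\tplan P}(x)=0$, then (WLOG) $\theta_{\tplan P}^-(x)=0$, so $N^-(\gamma)=0$ for $\tplan P$-a.e. $\gamma$, which is exactly (3) with $s=+1$. The converse is immediate.

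For (1)$\Leftrightarrow$(3), I would trace the equality cases of the chain \eqref{eq:strict1}--\eqref{eq:strict2}. Integrating the identity for $\vec m_\gamma(x)$ gives
\[
\abs{\vec\theta_{\tplan P}(x)} = \abs*{\int_\lipone \bigl(N^+ - N^-\bigr) \dd\tplan P}\,\leq\, \int_\lipone \abs{N^+-N^-}\dd\tplan P \,\leq\, \int_\lipone (N^++N^-)\dd\tplan P = \Theta_{\tplan P}(x),
\]
where the first inequality corresponds to \eqref{eq:strict1} (triangle inequality for the vector integral, after projecting onto $\tau_{\Sigma_{\tplan P}}(x)$) and the second to \eqref{eq:strict2} (pointwise $|N^+-N^-|\leq N^++N^-$). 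Assertion (1) forces equality throughout. The second equality forces $N^+(\gamma)N^-(\gamma)=0$ for $\tplan P$-a.e. $\gamma$, meaning each curve crosses $x$ with a single orientation. The first equality then forces $N^+-N^-$ to have a $\tplan P$-a.e.\ constant sign $s\in\{-1,+1\}$ on the set where it is nonzero, i.e.\ on the set of $\tplan P$-a.e.\ curves meeting $x$. Combining both conclusions yields (3). Conversely, (3) makes both inequalities equalities directly, giving (1).

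The only subtlety is the orientation constraint coming from regularity of $x$, which is used at the very start to ensure that all the involved tangents are $\pm\tau_{\Sigma_{\tplan P}}(x)$; once this is in hand, the argument is a standard equality-case analysis for a triangle inequality and presents no further obstacle.
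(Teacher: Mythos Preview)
The paper states this lemma without proof, treating it as an immediate consequence of the discussion of the equality cases in \eqref{eq:strict1}--\eqref{eq:strict2} that precedes it. Your argument is precisely the formalization of that discussion: reducing to the one-dimensional tangent line via regularity, introducing the counts $N^\pm$, and reading off the equality cases of the two triangle inequalities. It is correct and is exactly what the paper has in mind.

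One small remark: as you noticed parenthetically, the paper's literal definition of $\Gamma^\pm(x)$ omits the condition $\gamma(t)=x$; your interpretation (adding this condition) is the intended one, as is clear from the use of $\Gamma^\pm(x_0)$ in the proof of \Cref{lem:properties}. With that reading, your identification $\Gamma^\pm(x)=\{N^\pm\ge 1\}$ (modulo a $\tplan P$-null set coming from regularity) is correct, and the equivalences go through as you wrote.
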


\subsection{Existence of Lagrangian cycles}

From the perspective of reasoning by contradiction, the goal of this section is to study properties of traffic plans producing cancellations. The theorem below guarantees the existence of \enquote{Lagrangian cycles} in ${\tplan P}$, that is to say two families of curves with positive measures passing through two distinct points $x$ and $y$ in opposite order, namely ${\tplan P}(\Gamma(x, y)), {\tplan P}(\Gamma(y, x)) > 0$, where for any $u, v \in \R^d$
\begin{equation*}
\Gamma(u, v) := \set{\gamma \in \lipone : \exists s \le t: \gamma(s) = u, \gamma(t) = v}.
\end{equation*}
These cycles are obviously obstacles to $\tplan P$ being optimal, but at this point it is not yet a contradiction, as the optimality of $\tplan P$ is precisely what we want to prove.

\begin{thm}[Existence of Lagrangian cycles] \label{thm:cycles}
Let ${\tplan P}$ be a traffic plan with finite energy, and assume $\haus^1(\{ \bar \theta_{\tplan P} > 0\}) > 0$. Then there exists $F \subseteq \{ \bar \theta_{\tplan P} > 0\}$ with positive $\haus^1$-measure such that for every $x_0 \in F$, there exists $G \subseteq F$ with positive $\haus^1$-measure satisfying:
\begin{equation*}
\forall x \in G, \quad \min \left\{ {\tplan P}(\Gamma(x_0, x)), {\tplan P} (\Gamma(x, x_0)) \right\} \ge \bar \theta_{\tplan P}(x_0) /4.
\end{equation*}
\end{thm}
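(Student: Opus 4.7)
\emph{Proof plan.}
The strategy is to exploit the cone-crossing description of $\Sigma_{\tplan P}$ near regular density-$1$ points: at such a point $x_0$, curves in $\Gamma^+(x_0)$ exit $x_0$ into the $\tau(x_0)$-side while curves in $\Gamma^-(x_0)$ approach $x_0$ from the same side, so both families must cross every sufficiently close transverse slice of $\Sigma_{\tplan P}$ on the positive side. A simple co-area count will then force these slices to be singletons for many levels, producing a point $x$ that both families hit, whence the desired Lagrangian cycle. I would define $F$ as the subset of $\{\bar \theta_{\tplan P} > 0\}$ consisting of points $x_0$ that (a) are regular for $\tplan P$, (b) are Lebesgue density-$1$ points of $\Sigma_{\tplan P}$ with tangent line $\spn \tau(x_0)$, (c) satisfy $r^{-1} \haus^1(\Sigma_{\tplan P} \cap B_r(x_0) \setminus X(x_0, r, \spn \tau(x_0), s)) \to 0$ as $r \to 0^+$ for every $s \in (0, 1)$, and (d) are Lebesgue points of the Borel orientation $\tau$. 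Rectifiability of $\Sigma_{\tplan P}$ and \Cref{aeregul} give $\haus^1(\{\bar \theta_{\tplan P} > 0\} \setminus F) = 0$, so $\haus^1(F) > 0$.

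Fix $x_0 \in F$, write $\tau := \tau(x_0)$ and introduce the $1$-Lipschitz slicing function $f(x) := \tau \cdot (x - x_0)$; pick a small $s \in (0, 1/4)$. Applying \Cref{lem:derivative} at each time in $\gamma^{-1}(x_0)$ where the right tangent exists (with standard measurability of the admissible radii), we find, for any $\varepsilon > 0$, a radius $r_0 > 0$ such that the set $\Gamma^\pm_{r_0}(x_0) \subseteq \Gamma^\pm(x_0)$ of curves properly crossing the cone $X(x_0, r_0, \spn \tau, s)$ at some $t_0(\gamma) \in \gamma^{-1}(x_0)$ has $\tplan P(\Gamma^\pm_{r_0}(x_0)) \geq \bar \theta_{\tplan P}(x_0) - \varepsilon$. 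For $\gamma \in \Gamma^+_{r_0}(x_0)$ and $\ell \in (0, r_0 \sqrt{1 - s^2})$, the proper crossing provides a first time $\sigma_\ell^+(\gamma) \in (t_0(\gamma), t_\mathrm{out}(\gamma))$ with $\gamma(\sigma_\ell^+) \in X_+(x_0, r_0, \tau, s) \cap f^{-1}(\ell)$; analogously, for $\gamma \in \Gamma^-_{r_0}(x_0)$, take the last time $\sigma_\ell^-(\gamma) < t_0(\gamma)$ with $f(\gamma(\sigma_\ell^-)) = \ell$. Set
\[
N^\pm_\ell := \bigl(\gamma \mapsto \gamma(\sigma_\ell^\pm)\bigr)_\sharp \bigl(\tplan P \mres \Gamma^\pm_{r_0}(x_0)\bigr).
\]
Both measures have total mass $\geq \bar \theta_{\tplan P}(x_0) - \varepsilon$, and by rectifiability of $\tplan P$ both are supported on $\Sigma_{\tplan P} \cap f^{-1}(\ell) \cap X_+(x_0, r_0, \tau, s) \cap B_{r_0}(x_0)$.

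Applying the co-area inequality for $1$-rectifiable sets to $f$ on $\Sigma_{\tplan P} \cap X_+(x_0, r_0, \tau, s) \cap B_{r_0}(x_0)$, and using properties (b)--(c) to bound $\haus^1(\Sigma_{\tplan P} \cap X_+ \cap B_{r_0}) \leq r_0 (1 + \eta)$ with $\eta = \eta(s, r_0) \to 0$, one obtains
\[
\int_0^{r_0 \sqrt{1 - s^2}} \# \bigl( \Sigma_{\tplan P} \cap f^{-1}(\ell) \cap X_+ \cap B_{r_0} \bigr) \dd \ell \leq r_0 (1 + \eta).
\]
The integer-valued integrand is $\geq 1$ throughout the interval (each properly crossing curve contributes at least one point), and for $s$ and $r_0$ small its mean is strictly less than $2$; hence the set $L$ of levels where the cardinality equals $1$ has positive Lebesgue measure. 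For $\ell \in L$, call $x^\ast(\ell)$ the unique point, and observe that both $N^\pm_\ell$ are concentrated there. Since every $\gamma$ contributing to $N^+_\ell$ belongs to $\Gamma(x_0, x^\ast(\ell))$ and every $\gamma$ contributing to $N^-_\ell$ belongs to $\Gamma(x^\ast(\ell), x_0)$, this gives
\[
\min \bigl\{ \tplan P(\Gamma(x_0, x^\ast(\ell))), \tplan P(\Gamma(x^\ast(\ell), x_0)) \bigr\} \geq \bar \theta_{\tplan P}(x_0) - \varepsilon.
\]
Choosing $\varepsilon \leq \tfrac{3}{4} \bar \theta_{\tplan P}(x_0)$ secures the required $\bar \theta_{\tplan P}(x_0)/4$ threshold, and $\haus^1(G(x_0)) \geq |L| > 0$ where $G(x_0) := \{x^\ast(\ell) : \ell \in L\}$, by co-area applied to the injective section $\ell \mapsto x^\ast(\ell)$.

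To see $G(x_0) \subseteq F$, note that properties (a)--(d) defining $F$ hold for $\haus^1$-a.e. $x \in \Sigma_{\tplan P}$, hence for $x^\ast(\ell)$ for Lebesgue-a.e. $\ell \in L$ (co-area again). The positivity $\bar \theta_{\tplan P}(x^\ast(\ell)) > 0$ is extracted as follows: at the first crossing $\sigma_\ell^+(\gamma)$ one has $(f \circ \gamma)'(\sigma_\ell^+) \geq 0$, and since $x^\ast(\ell)$ is regular, $\gamma'(\sigma_\ell^+)$ lies in $\spn \tau(x^\ast(\ell))$; the Lebesgue-point property of $\tau$, combined with $x^\ast(\ell)$ lying in the thin cone around $\tau(x_0)$, gives $\tau(x^\ast(\ell)) \cdot \tau(x_0) > 0$, forcing $\gamma'(\sigma_\ell^+) = +|\gamma'(\sigma_\ell^+)| \tau(x^\ast(\ell))$. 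Thus the $N^+$-curves sit in $\Gamma^+(x^\ast(\ell))$ and, symmetrically, the $N^-$-curves sit in $\Gamma^-(x^\ast(\ell))$, so $\bar \theta_{\tplan P}(x^\ast(\ell)) \geq \bar \theta_{\tplan P}(x_0)/4 > 0$. The main obstacle I expect is the bookkeeping in this last paragraph: setting up Borel-measurable selections $\sigma_\ell^\pm$ that behave well jointly in $(\gamma, \ell)$, upgrading the averaged co-area estimate to a positive-measure set of genuine singletons, and tracking that the selected point $x^\ast(\ell)$ inherits every regularity condition from $x_0$ — none of which is conceptually deep but each of which requires careful handling.
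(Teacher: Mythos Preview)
Your argument has a genuine gap at the very outset: properties (b) and (c) in your definition of $F$ need not hold $\haus^1$-almost everywhere on $\{\bar\theta_{\tplan P}>0\}$, because $\haus^1\mres\Sigma_{\tplan P}$ is \emph{not} known to be locally finite. Finite $\alpha$-energy only yields $\int_{\Sigma_{\tplan P}}\theta_{\tplan P}^{\alpha}\dd\haus^1<\infty$, which controls the $\haus^1$-measure of each superlevel set $\{\theta_{\tplan P}>\delta\}$ but says nothing about $\haus^1(\Sigma_{\tplan P})$. An explicit obstruction: take a main segment $M=[0,1]\times\{0\}$ carrying unit mass in both orientations (so $\bar\theta_{\tplan P}\equiv 1$ on $M$), and at each odd dyadic $a_k=j/2^n\in M$ attach a perpendicular segment $S_k$ of length $\ell_k=2^{-n}$ traversed by a curve of mass $m_k=4^{-n}$. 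Then $\sum_k m_k<\infty$, $\sum_k m_k^\alpha\ell_k<\infty$, yet for every $x_0\in M$ and every $r>0$ one has $\haus^1(\Sigma_{\tplan P}\cap B_r(x_0))=+\infty$. In this example your set $F$ is empty, the bound $\haus^1(\Sigma_{\tplan P}\cap X_+\cap B_{r_0})\leq r_0(1+\eta)$ is false, and the co-area singleton mechanism collapses: the integer-valued integrand $\#\bigl(\Sigma_{\tplan P}\cap f^{-1}(\ell)\cap X_+\cap B_{r_0}\bigr)$ is typically infinite.

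The paper's proof sidesteps this obstruction by two devices that are not mere bookkeeping. First, it takes $F=\{\bar\theta_{\tplan P}>0\}\cap\img\bar\gamma$ for a \emph{single} Lipschitz curve $\bar\gamma$, so that $\haus^1\mres F$ is finite and the density-one property \cref{prop:i} is available on $F$. Second, and crucially, it never attempts to control $\haus^1\mres\Sigma_{\tplan P}$: all estimates outside $F$ are carried by the \emph{weighted} measure $\theta_{\tplan P}\,\haus^1\mres\Sigma_{\tplan P}$, which \emph{is} a finite Radon measure (this is exactly the content of the Vitali argument establishing \cref{prop:ii} in \Cref{lem:properties}). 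The proof then compares $\int_{F\cap B_r}\theta_{\tplan Q_r^\pm}\dd\haus^1$ with $2r\,\tplan P(\Lambda_{r_0}^\pm)$ and uses Markov's inequality to locate a positive-$\haus^1$-measure set $G\subseteq F$ where the multiplicities $\theta_{\tplan Q_r^\pm}$ are large; no singleton-slice claim on $\Sigma_{\tplan P}$ is needed. Your slicing idea could plausibly be rescued along these lines---slice $F$ rather than $\Sigma_{\tplan P}$, and use a $\theta_{\tplan P}$-weighted co-area inequality together with \cref{prop:ii} to show that at most levels the mass of $N_\ell^\pm$ falling outside the unique $F$-slice point is negligible---but the missing ingredient is precisely \Cref{lem:properties}\cref{prop:ii}, not a detail you can absorb into ``bookkeeping''.
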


Before proving this theorem, we need the following lemma, which describes the geometric situation on small balls $B_r(x_0)$ around every point $x_0$ in a suitable subset $F$ of $\{\bar\theta_{\tplan P} > 0\}$: \enquote{most} of $\Sigma_{\tplan P}$ lies inside $F$, itself contained in a cone which is crossed properly, and in both directions, by a fixed amount of curves. \begin{lem} \label{lem:properties}
Under the assumptions of \Cref{thm:cycles}, there exists $F \subseteq \{ \bar \theta_{\tplan P} > 0\}$ with positive $\haus^1$-measure such that $\haus^1$-almost every $x_0 \in F$ satisfies:
\begin{enumerate}[(i)]
\item\label{prop:i} $\displaystyle \haus^1\left(F \cap B_r(x_0)\right) \sim 2r$ as $r \to 0$;
\item\label{prop:ii} $\displaystyle \int_{(\Sigma_{\tplan P} \setminus F) \cap B_r(x_0)} \theta_{\tplan P} \dd{\haus^1} = o(r)$;
\item\label{prop:iii} for all $s \in (0, 1)$, there exists $r_1 > 0$ such that $F \cap B_{r_1}(x_0)$ is contained in the cone $X(x_0, r_1, \spn \tau_{\Sigma_{\tplan P}}(x_0), s)$;
\item\label{prop:iv} for all $s \in (0, 1)$, there exists $r_0 > 0$ and two (not necessarily disjoint) Borel sets of curves $\Lambda_{r_0}^\pm \subseteq \Gamma^\pm(x_0)$, as well as Borel maps $t_0^\pm : \Lambda_{r_0}^\pm \to \R_+$ such that:
\begin{itemize}
\item $\min \{{\tplan P}(\Lambda_{r_0}^+), {\tplan P}(\Lambda_{r_0}^-) \} \ge \bar \theta_{\tplan P} (x_0)/2$;
\item every $\gamma \in \Lambda_{r_0}^\pm$ crosses the cone $X(x_0, r_0, \spn \tau_{\Sigma_{\tplan P}}(x_0), s)$ properly at time $t_0^\pm(\gamma)$;
\item for any $0 < r \le r_0$ there exist Borel maps $t_{r, in}^\pm$, $t_{r, out}^\pm$ from $\Lambda_{r_0}^\pm$ to $\R_+$ which are entrance and exit times in the cone $X(x_0, r, \spn \tau_{\Sigma_{\tplan P}}(x_0), s)$ for every curve in $\Lambda_{r_0}^\pm$.
\end{itemize}
\end{enumerate}
\end{lem}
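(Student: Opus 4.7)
The plan is to construct $F$ as a suitably clean subset of $\{\bar\theta_{\tplan P} \ge \varepsilon\}$ sitting inside a single Lipschitz graph, and then to verify properties (i)--(iv) pointwise at $\haus^1$-a.e. $x_0\in F$. First I would pick $\varepsilon > 0$ such that $S := \{\bar\theta_{\tplan P} \ge \varepsilon\}$ has positive $\haus^1$-measure (possible since $\bar\theta_{\tplan P}$ is a countable increasing union of level sets). Since $\Sigma_{\tplan P}$ is $1$-rectifiable, I decompose it, modulo an $\haus^1$-null set, into countably many disjoint Borel pieces $G_k$, each contained in the graph of a Lipschitz function $f_k$ over a fixed one-dimensional subspace $L_k \subseteq \R^d$ (the standard decomposition result for rectifiable sets). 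For some $k$, $F := S\cap G_k$ has positive $\haus^1$-measure, and this is my $F$.

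Properties (i) and (iii) are then extracted from the graph structure. Via the bi-Lipschitz parametrization $t\mapsto (t,f_k(t))$, at $\haus^1$-a.e. $x_0\in F$ the parameter $t_0$ is a Lebesgue density point of $\pi_{L_k}(F)\subset\R$ of density $1$ and $f_k$ is differentiable at $t_0$ (Rademacher). The density transfers through the bi-Lipschitz map to give (i). For (iii), the differentiability expansion $f_k(t) - f_k(t_0) = (t-t_0)f_k'(t_0)+o(|t-t_0|)$ implies that, for any $s\in(0,1)$, there exists $r_1 > 0$ such that every $y\in G_k\cap B_{r_1}(x_0)$ satisfies $d(y-x_0,V)\le s|y-x_0|$, where $V = \spn(1,f_k'(t_0))$. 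Since $F\subseteq G_k$ and the tangent line $V$ agrees $\haus^1$-a.e. on $F$ with $\spn\tau_{\Sigma_{\tplan P}}(x_0)$, this gives (iii). Property (ii) is then a soft consequence of mutual singularity: the finite Borel measures $\mu := \theta_{\tplan P}\,\haus^1\mres(\Sigma_{\tplan P}\setminus F)$ and $\sigma := \haus^1\mres F$ are mutually singular; Besicovitch's differentiation theorem yields $\mu(B_r(x_0))/\sigma(B_r(x_0))\to 0$ for $\sigma$-a.e.\ $x_0$, and together with (i) this gives $\mu(B_r(x_0)) = o(r)$, which is exactly (ii).

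For (iv), fix $s\in(0,1)$ and a point $x_0\in F$ which, by Remark~\ref{aeregul}, is regular for $\tplan P$ ($\haus^1$-a.e. point of $F$). For each $r > 0$ set
\[
\Lambda_r^\pm := \bigl\{\gamma\in\Gamma^\pm(x_0) : \gamma \text{ crosses } X(x_0,r,\spn\tau_{\Sigma_{\tplan P}}(x_0),s) \text{ properly at some } t\in\gamma^{-1}(x_0)\bigr\}.
\]
By Lemma~\ref{lem:derivative}, if proper crossing occurs at radius $r$ it occurs at every smaller radius, so $r\mapsto\Lambda_r^\pm$ is non-decreasing as $r\to 0$. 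Moreover, for any $\gamma\in\Gamma^\pm(x_0)$ for which $x_0$ is a regular point of $\gamma$ with $\tanspace(x_0,\img\gamma) = \tanspace(x_0,\Sigma_{\tplan P})$---a full $\tplan P$-measure subset of $\Gamma^\pm(x_0)$ by regularity of $x_0$ for $\tplan P$---Lemma~\ref{lem:derivative} furnishes some $r_0(\gamma) > 0$ for which $\gamma\in\Lambda_{r_0(\gamma)}^\pm$. Hence $\bigcup_{r > 0}\Lambda_r^\pm$ has full measure in $\Gamma^\pm(x_0)$, so by monotone convergence $\tplan P(\Lambda_r^\pm)\to\theta_{\tplan P}^\pm(x_0)\ge\bar\theta_{\tplan P}(x_0)$, giving $\tplan P(\Lambda_{r_0}^\pm)\ge\bar\theta_{\tplan P}(x_0)/2$ for $r_0$ small enough. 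The Borel maps $t_0^\pm,t_{r,\mathrm{in}}^\pm,t_{r,\mathrm{out}}^\pm$ are obtained by measurable selection---e.g., $t_0^\pm(\gamma)$ as the infimum of proper-crossing times. The main obstacle is the combination of (iii) and (iv): (iii) forces $F$ to sit inside a single Lipschitz graph (mere approximate tangency only gives asymptotic cone containment in the measure sense, not pointwise), while (iv) requires careful handling of the Borel measurability of the selection maps and a clean passage to the monotone limit in $r$, both of which depend on $x_0$ through the orientation $\tau_{\Sigma_{\tplan P}}(x_0)$.
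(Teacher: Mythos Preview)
Your approach is correct and close to the paper's, with two differences worth flagging. First, the paper takes $F$ to be the intersection of $\{\bar\theta_{\tplan P}>0\}$ with the \emph{image of a single Lipschitz curve} $\bar\gamma$ (rather than a Lipschitz graph piece $G_k$); property~(iii) is then obtained by applying Lemma~\ref{lem:derivative} to $\bar\gamma$ at each of the finitely many preimages of $x_0$. Your graph-based argument for~(iii) works just as well, and is essentially equivalent. Second, and more interestingly, for~(ii) the paper proceeds in two steps: it first shows that $x_0$ is a density point of $\one_F$ with respect to $\theta_{\tplan P}\haus^1\mres\Sigma_{\tplan P}$, giving $\mu(B_r(x_0)) = o\bigl(\int_{\Sigma_{\tplan P}\cap B_r(x_0)}\theta_{\tplan P}\,d\haus^1\bigr)$, and then runs a separate Vitali covering argument to prove $\int_{\Sigma_{\tplan P}\cap B_r(x_0)}\theta_{\tplan P}\,d\haus^1 = O(r)$ at $\haus^1$-a.e.\ point. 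Your direct appeal to Besicovitch differentiation of the mutually singular pair $(\mu,\sigma)$ bypasses the Vitali step and is cleaner; however, you should justify explicitly that $\mu = \theta_{\tplan P}\haus^1\mres(\Sigma_{\tplan P}\setminus F)$ is finite, which is where the finite-energy hypothesis enters: since $\theta_{\tplan P}\le\mass(\tplan P)$ one has $\int_{\Sigma_{\tplan P}}\theta_{\tplan P}\,d\haus^1 \le \mass(\tplan P)^{1-\alpha}\int_{\Sigma_{\tplan P}}\theta_{\tplan P}^\alpha\,d\haus^1 \le \mass(\tplan P)^{1-\alpha}\ener^\alpha(\tplan P) < \infty$. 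Your treatment of~(i) via ``transfer through the bi-Lipschitz map'' is a little abbreviated---getting the exact constant $2r$ really uses differentiability of $f_k$ at $t_0$ (or equivalently the standard density-one property of $1$-rectifiable sets, \cite[Theorem~17.6]{Mat}), not just bi-Lipschitz equivalence---but this is routine to fill in. Property~(iv) is handled identically in both proofs.
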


\begin{proof}

Since $\Sigma_{\tplan P}$ is 1-rectifiable, it is contained in the union of an $\haus^1$-negligible set and of countably many images of Lipschitz curves of finite length. Thus, there exists a Lipschitz curve $\bar \gamma$ such that $\haus^1(\{ \bar \theta_{\tplan P} > 0\} \cap \img \bar \gamma) > 0$. We set $F := \{ \bar \theta_{\tplan P} > 0\} \cap \img \bar \gamma$. We want to show each item holds for $\haus^1$-almost every $x_0 \in F$.

\begin{innerproof}[Proof of \cref{prop:i}]
We know that $\img \bar \gamma$ is 1-rectifiable and $\haus^1(\img \bar \gamma) < \infty$. Then \cite[Theorem 17.6]{Mat} implies that $\haus^1(\img \bar \gamma \cap B_r(x_0)) \sim 2r$ as $r \to 0$ for $\haus^1$-almost every $x_0 \in \img \bar \gamma$. Moreover, almost every $x_0 \in F \subseteq \img \bar \gamma$ is a density point of the function $\one_F$ with respect to the Radon measure $\haus^1 \mres \img \bar \gamma$ (see \cite[Theorem 1.32]{EG}) i.e. $\haus^1\left( F \cap B_r(x_0) \right) \sim \haus^1\left( \img \bar \gamma \cap B_r(x_0) \right)$, hence the result.
\end{innerproof}

\begin{innerproof}[Proof of \cref{prop:ii}]
By the same argument, almost every $x_0 \in F \subseteq \Sigma_{\tplan P}$ is a density point of the function $\one_F$ with respect to the Radon measure $\theta_{\tplan P} \haus^1 \mres \Sigma_{\tplan P}$ so that
\begin{equation} \label{eq:o(int)}
\int_{(\Sigma_{\tplan P} \setminus F) \cap B_r(x_0)} \theta_{\tplan P} \dd{\haus^1} = o \left( \int_{\Sigma_{\tplan P} \cap B_r(x_0)} \theta_{\tplan P} \dd{\haus^1} \right).
\end{equation}
Yet a subset of $F \subseteq \Sigma_{\tplan P}$ which is negligible for $\theta_{\tplan P} \haus^1 \mres \Sigma_{\tplan P}$ is also $\haus^1$-negligible, so it is still true for $\haus^1$-almost all $x_0 \in F$.

In addition, for $\haus^1$-almost every $x_0 \in \Sigma_{\tplan P}$, there exist $c = c(x_0) > 0$ and $\rho = \rho(x_0) > 0$ such that
\begin{equation} \label{eq:O(r)}
\forall r \le \rho, \quad \int_{\Sigma_{\tplan P} \cap B_r(x_0)} \theta_{\tplan P} \dd{\haus^1} \le c r.
\end{equation}
Indeed, let us show by contraposition that the set
\[A := \set*{x \in \Sigma_{\tplan P} : \limsup_{r \to 0} \frac{1}{r} \int_{\Sigma_{\tplan P} \cap B_r(x_0)} \theta_{\tplan P} \dd{\haus^1} = + \infty}\]
is $\haus^1$-negligible. Let $c,\varepsilon > 0$. For every $x \in A$, there exists $r(x) \in (0, \varepsilon]$ satisfying
\begin{equation} \label{eq:forvitali}
\frac{1}{r(x)} \int_{\Sigma_{\tplan P} \cap \bar{B}_{r(x)}(x)} \theta_{\tplan P} \dd{\haus^1} \ge c.
\end{equation}

The family $\set{\bar{B}_{r(x)}(x)}_{x \in A}$ is a covering of $A$ and for every $x \in A$, $r(x) \le \varepsilon$. Then by Vitali's Covering Theorem (\cite[Theorem 2.1]{Mat}), one may extract a (finite or countable) sequence $\{B_i\}_{i \in I} \subseteq \set{\bar{B}_{r(x)}(x) : x \in A}$ of disjoint closed balls such that $A \subseteq \bigcup_{i \in I} \hat{B}_i$, where $\hat{B}_i$ is the concentric ball to $B_i$ with radius 5 times the radius of $B_i$. Therefore we get the following inequalities:
\[
\haus_{5 \varepsilon}^1(A) \le \sum_{i \in I} \diam \hat{B}_i \le 5 \sum_{i \in I} \diam B_i \refabove{\cref{eq:forvitali}}{\le} \frac{10}{c} \sum_{i \in I} \int_{\Sigma_{\tplan P} \cap B_i} \theta_{\tplan P} \dd{\haus^1} \le \frac{10}{c} \int_{\Sigma_{\tplan P}} \theta_{\tplan P} \dd{\haus^1}
\]
where the last inequality is due to the fact that the balls $B_i$ are disjoint. Since 
\begin{equation*}
\begin{split}
\int_{\Sigma_{\tplan P}} \theta_{\tplan P} \dd{\haus^1}&= \int_{\Sigma_{\tplan P}} \left ( \frac{\theta_{\tplan P}}{\mathbb M(\tplan P)}\right ) \mathbb M(\tplan P) \dd{\haus^1} \\
&\le \int_{\Sigma_{\tplan P}} \left ( \frac{\theta_{\tplan P}}{\mathbb M(\tplan P)}\right )^\alpha \mathbb M(\tplan P) \dd{\haus^1} = \int_{\Sigma_{\tplan P}} \theta_{\tplan P}^\alpha \mathbb M(\tplan P)^{1-\alpha} \dd{\haus^1}< \infty
\end{split}
\end{equation*}
and $c$ is arbitrary, we find that for all $\varepsilon > 0$, $\haus_{5 \varepsilon}^1(A) = 0$, which yields $\haus^1(A) = 0$.

Therefore, \cref{eq:o(int)} and \cref{eq:O(r)} hold for $\haus^1$-almost every point in $F$, hence the result.
\end{innerproof}

\begin{innerproof}[Proof of \cref{prop:iii}]
Recall that there is a Lipschitz curve $\bar \gamma \subseteq \Sigma_{\tplan P}$ whose image contains $F$. By \Cref{aeregul}, at $\haus^1$-almost every point $x \in \img \bar \gamma$, $\bar{\gamma}^{-1}(x)$ is finite and for all $t \in \bar{\gamma}^{-1}(x)$, $\spn \bar{\gamma}'(t) = \spn \tau_{\Sigma_{\tplan P}}(x)$. Let $x_0 \in \img \bar \gamma$ be such a point. For every $t \in \bar{\gamma}^{-1}(x_0)$, we apply \Cref{lem:derivative} to get a radius $r_t > 0$ such that $\bar \gamma$ lies in the cone $X(x_0, r_t, \spn \tau_{\Sigma_{\tplan P}}(x_0), s)$ in a small interval $I_t = ]t-\delta_t,t+\delta_t[$. Set $r = \min \set{r_t > 0 : \bar\gamma(t) = x_0} > 0$, then taking $r_1 \leq r$ small enough to make sure that $B_{r_1}(x_0) \cap \gamma\left(\R_+\setminus \bigcup_t I_t\right) = \emptyset$ leads to the desired conclusion.
\end{innerproof}

\begin{innerproof}[Proof of \cref{prop:iv}]
Let $x_0 \in F$ be a regular point for $\tplan P$ and set $V := \spn \tau_{\Sigma_{\tplan P}}(x_0)$. The function
\begin{align*}
t_0^\pm : \Gamma^\pm(x_0) &\to \R_+ \\
\gamma &\mapsto \inf \set{t \in \R_+ : \gamma(t) = x_0, \gamma'(t)/|\gamma'(t)| = \pm \tau_\Sigma(x_0)}
\end{align*}
is well-defined and $t_0^\pm(\gamma) \in (0, T_\infty(\gamma))$ for $\tplan P$-a.e. $\gamma \in \Gamma^\pm(x_0)$, as $x_0$ is a regular point for $\tplan P$. Then fix $s \in (0, 1)$. We denote by $r_0^\pm(\gamma)$ the (positive) radius given by \cref{eq:radius} in \Cref{lem:derivative} and we set $\Lambda_r^\pm := \set{\gamma \in \Gamma^\pm(x_0) : r_0^\pm(\gamma) > r}$ for any $r > 0$. Since $r_0^\pm(\gamma) > 0$ for every $\gamma \in \Gamma^\pm(x_0)$ and $\{\Lambda_{1/n}^\pm\}_{n\in\N^*}$ are a nested family of set such that $\bigcup_{n \in \N^\star} \Lambda_{1/n}^\pm = \Gamma^\pm(x_0)$, then there exists $n \in \N$ such that ${\tplan P}(\Lambda_{1/n}^+) \ge {\tplan P}(\Gamma^+(x_0))/2 \ge \bar \theta_{\tplan P} (x_0)/2$ and ${\tplan P}(\Lambda_{1/n}^-) \ge {\tplan P}(\Gamma^-(x_0))/2 \ge \bar \theta_{\tplan P} (x_0)/2$. Therefore, setting $r_0 := 1/n$, every $\gamma \in \Lambda_{r_0}^\pm$ crosses the cone $X(x_0, r_0, V, s)$ properly at time $t_0^\pm(\gamma)$. This is also true for all the homothetic cones with radius $0 < r \le r_0$ (see \Cref{lem:derivative}). Thus for every  $\gamma \in \Lambda_{r_0}^\pm$ we define the entrance and exit times as:
\begin{align*}
t_{r, in}^\pm(\gamma) : \Lambda_{r_0}^\pm &\to \R_+\\
\gamma &\mapsto \sup \set{t \le t_0^\pm(\gamma) : \gamma(t) \not\in B_r(x_0)},\\
t_{r, out}^\pm(\gamma) : \Lambda_{r_0}^\pm &\to \R_+ \\
\gamma &\mapsto \inf \set{t \ge t_0^\pm(\gamma) : \gamma(t) \not\in B_r(x_0)}. \qedhere
\end{align*}
\end{innerproof}
\end{proof}

We can now go back to the existence of Lagrangian cycles in ${\tplan P}$:
\begin{proof}[Proof of \Cref{thm:cycles}]
Let $F \subseteq \{ \bar \theta_{\tplan P} > 0 \}$ given by \Cref{lem:properties} and let $x_0 \in F$ satisfying \cref{prop:i,prop:ii,prop:iii,prop:iv}. We fix $s \in (0, 1)$ (for example $s = 1/2$) and we take $r_0 > 0$, $\Lambda_{r_0}^\pm$ and $t_{r, in}^\pm$, $t_{r, out}^\pm$ as in \cref{prop:iii,prop:iv}. For any $0 < r \le r_0$, we define $\tplan Q_r^\pm := (g_r)_\sharp ({\tplan P} \mres \Lambda_{r_0}^\pm)$ where
\begin{align*}
g_r : \Lambda_{r_0}^\pm &\to \lipone \\
\gamma &\mapsto \gamma_{\vert [t_{r, in}^\pm(\gamma), t_{r, out}^\pm(\gamma)]}.
\end{align*}
This is obviously a traffic plan. Let us estimate its multiplicity at $x \in \R^d$:
\begin{equation*}
\theta_{\tplan Q_r^\pm}(x) = \int_{\Lambda_{r_0}^\pm} \one_{x \in \img g_r(\gamma)} \dd \tplan P(\gamma) \le \int_\lipone \one_{x \in \img \gamma} \dd \tplan P(\gamma) = \theta_{\tplan P}(x),
\end{equation*}
so that
\begin{equation} \label{eq:rest}
\int_{(\Sigma_{\tplan P} \setminus F) \cap B_r(x_0)} \theta_{\tplan Q_r^\pm} \dd{\haus^1} \le \int_{(\Sigma_{\tplan P} \setminus F) \cap B_r(x_0)} \theta_{\tplan P} \dd{\haus^1}.
\end{equation}
Furthermore, Fubini's Theorem yields:
\begin{equation} \label{eq:mainpart}
\int_{\Sigma_{\tplan P} \cap B_r(x_0)} \theta_{\tplan Q_r^\pm} \dd{\haus^1} = \int_{\Lambda_{r_0}^\pm} \haus^1(\img g_r(\gamma)) \dd \tplan P(\gamma) \ge 2r {\tplan P}(\Lambda_{r_0}^\pm),
\end{equation}
where the inequality comes from the fact that every curve in $\Lambda_{r_0}^\pm$ crosses the cone $X(x_0, r, V, s)$ properly, hence their length between the entrance and exit times is at least $2r$.
Recalling \cref{prop:i,prop:ii} of \Cref{lem:properties}, as well as the fact that $\theta_{\tplan Q_r^\pm} \le {\tplan P}(\Lambda_{r_0}^\pm)$, \cref{eq:mainpart} and \cref{eq:rest} lead to
\begin{align*}
0 &\le \int_{F \cap B_r(x_0)} \left( {\tplan P}(\Lambda_{r_0}^\pm) - \theta_{\tplan Q_r^\pm} \right) \dd{\haus^1}\\
&= \haus^1(F \cap B_r(x_0)){\tplan P}(\Lambda_{r_0}^\pm) - \int_{\Sigma_{\tplan P} \cap B_r(x_0)} \theta_{\tplan Q_r^\pm} \dd{\haus^1} + \int_{(\Sigma_{\tplan P} \setminus F) \cap B_r(x_0)} \theta_{\tplan Q_r^\pm} \dd{\haus^1} \\
&\le \left(\haus^1(F \cap B_r(x_0)) - 2r \right) {\tplan P}(\Lambda_{r_0}^\pm) + \int_{(\Sigma_{\tplan P} \setminus F) \cap B_r(x_0)} \theta_{\tplan P} \dd{\haus^1} \\
&= o(r).
\end{align*}
Thus for any $c \in (0, 1)$, Markov inequality yields
\begin{equation*}
\haus^1\left( \left\{ {\tplan P}(\Lambda_{r_0}^\pm) - \theta_{\tplan Q_r^\pm} > c {\tplan P}(\Lambda_{r_0}^\pm) \right\} \cap F \cap B_r(x_0) \right) = o(r),
\end{equation*}
and therefore
\begin{equation*}
\frac{\haus^1\left( \left\{ \theta_{\tplan Q_r^\pm} \ge (1 - c) {\tplan P}(\Lambda_{r_0}^\pm) \right\} \cap F \cap B_r(x_0) \right)}{\haus^1(F \cap B_r(x_0))} \xstrongto{r\to 0} 1.
\end{equation*}
Now we take $c = 1/2$ and $r > 0$ small enough so that the quotient above (for $\pm$ being $+$ and $-$) is greater than $3/4$. We set $G := \left\{ \theta_{\tplan Q_r^+} \ge {\tplan P}(\Lambda_{r_0}^+)/2 \right\} \cap \left\{ \theta_{\tplan Q_r^-} \ge {\tplan P}(\Lambda_{r_0}^-)/2 \right\} \cap F \cap B_r(x_0)$. As expected, we have $\haus^1(G \cap B_r(x_0)) \ge \haus^1(F \cap B_r(x_0))/2$. Note that by \cref{prop:iii}, $G \subseteq X(x_0, r, \spn \tau_{\Sigma_{\tplan P}}(x_0), s)$. Finally, let $x \in G$ be distinct from $x_0$ and assume for example that $x \in X_+(x_0, r, \tau_\Sigma(x_0), s)$. Since by \cref{prop:iv} every curve in $\Lambda_{r_0}^\pm$ crosses the cone $X(x_0, r, \spn \tau_{\Sigma_{\tplan P}}(x_0), s)$ properly, then every curve $\gamma$ in $\Lambda_{r_0}^+$ such that $x \in \img g_r(\gamma)$ goes through $x_0$ before going through $x$ along the piece $g_r(\gamma)$, and vice versa for the curves in $\Lambda_{r_0}^-$, which yields:
\begin{align*}
\frac{{\tplan P}(\Lambda_{r_0}^+)}{2} \le \theta_{\tplan Q_r^+}(x_0) &= {\tplan P}\left(\set{\gamma \in \Lambda_{r_0}^+ : x \in \img g_r(\gamma)}\right) \le {\tplan P}\left(\Gamma(x_0, x)\right),\\
\frac{{\tplan P}(\Lambda_{r_0}^-)}{2} \le \theta_{\tplan Q_r^-}(x_0) &= {\tplan P}\left(\set{\gamma \in \Lambda_{r_0}^- : x \in \img g_r(\gamma)}\right) \le {\tplan P}\left(\Gamma(x, x_0)\right).
\end{align*}
Analogously, if $x \in X_-(x_0, r, \tau_\Sigma(x_0), s)$, then ${\tplan P}(\Lambda_{r_0}^+)/2 \le {\tplan P}(\Gamma(x, x_0))$ and ${\tplan P}(\Lambda_{r_0}^-)/2 \le {\tplan P}(\Gamma(x_0, x))$.
We conclude recalling ${\tplan P}(\Lambda_{r_0}^\pm) \ge \bar \theta_{\tplan P} (x_0)/2$ from \cref{prop:iv}.
\end{proof}

\section{Cycles and quasi-cycles in traffic plans}\label{sec:cyclesandquasi}

Consider a sequence of traffic plans $\{\tplan P_n\}_{n\in\N}$ with bounded energy which converges to a traffic plan $\tplan P$. Assume there exists $(x,y)\in X\times X$ such that $\tplan P(\Gamma(x,y))>0$ and $\tplan P(\Gamma(y,x))>0$. We show existence of \emph{quasi-cycles} in the $\tplan P_n$'s, namely we prove the following.
Denote for any $u, v \in \R^d$ and $\eps>0$
\begin{equation*}
\Gamma_\varepsilon(u, v) := \set{\gamma \in \lipone : \exists s \le t: \gamma(s) \in B_\varepsilon(u), \gamma(t) \in B_\varepsilon(v)},
\end{equation*}
there exists $\delta>0$ such that for every $\eps >0$ there exists $N \in \N$ such that
\[\min \left\{{\tplan P}_n(\Gamma_\varepsilon(x, y)), {\tplan P}_n(\Gamma_\varepsilon(y, x)) \right\} \geq \delta, \quad \forall n\geq N. \]
These points may be well-chosen to guarantee that the energy of $\tplan P_n$ vanishes somewhat uniformly in $n$ on small balls around them. Then we estimate the energy gain 
obtained by removing such quasi-cycles. To simplify the construction, we will build a competing transport path rather than a traffic plan, but this is not a problem by equivalence of the two frameworks (we can always build a traffic plan with a lower or equal cost).

\subsection{From cycles to quasi-cycles}\label{sec:cyclestoquasi}

We start with the lemma controlling the energy on small balls: for almost every $x$, the energy of $T_n$ on small balls $B_\varepsilon(x)$ becomes arbitrarily small uniformly on a subsequence, as $\varepsilon$ goes to $0$. The lemma is proven for transport paths as justified before.

\begin{lem} \label{lem:concentration}
Let $\{ T_n \}_{n \in \N}$ be a sequence of transport paths such that $\sup_{n \in \N} \mass^\alpha(T_n) < \infty$. Then, one has for $\haus^1$-almost every $x \in \R^d$:
\begin{equation} \label{eq:energyball}
\liminf_{n \to \infty} \mass^\alpha(T_n \mres B_\varepsilon(x)) \xstrongto{\eps\to 0} 0.
\end{equation}
\end{lem}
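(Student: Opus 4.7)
The plan is to associate to each $T_n$ the scalar positive measure $\nu_n := |\vec\theta_n|^\alpha \haus^1 \mres E_n$ (where $T_n = \rectcurr{E_n, \vec\theta_n}$, which is well-defined because $\sup_n \mass^\alpha(T_n) < \infty$ forces rectifiability), extract a weak-$\star$ subsequential limit, and exploit the fact that a finite Borel measure has at most countably many atoms.

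By construction $\nu_n(B_\varepsilon(x)) = \mass^\alpha(T_n \mres B_\varepsilon(x))$ and $\nu_n(\R^d) \leq \sup_m \mass^\alpha(T_m) < \infty$. Since the transport paths in the application are supported in the compact set $X = \bar B_R(0)$, so are the $\nu_n$, and Banach--Alaoglu provides a subsequence $\{\nu_{n_k}\}$ with $\nu_{n_k} \weakstarto \nu$ for some finite positive Borel measure $\nu$ on $X$.

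The core step is the Portmanteau inequality applied to the closed ball $\bar B_\varepsilon(x)$: for every $x \in \R^d$ and $\varepsilon > 0$,
\[
\liminf_n \nu_n(B_\varepsilon(x)) \leq \liminf_k \nu_{n_k}(B_\varepsilon(x)) \leq \limsup_k \nu_{n_k}(\bar B_\varepsilon(x)) \leq \nu(\bar B_\varepsilon(x)),
\]
where the first inequality uses that a $\liminf$ along a subsequence dominates the one along the full sequence. Letting $\varepsilon \to 0^+$, monotone convergence for the decreasing family $\{\bar B_\varepsilon(x)\}_{\varepsilon > 0}$ gives $\nu(\bar B_\varepsilon(x)) \to \nu(\{x\})$.

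To conclude, $\nu$ being a finite measure has at most countably many atoms, so $\nu(\{x\}) = 0$ for $\haus^1$-a.e. $x$. The function $\varepsilon \mapsto \liminf_n \nu_n(B_\varepsilon(x))$ is monotone in $\varepsilon$, hence its limit as $\varepsilon \to 0^+$ exists, and by the chain above it is bounded by $\nu(\{x\}) = 0$ for $\haus^1$-a.e. $x$, which is exactly \cref{eq:energyball}. There is essentially no obstacle in this argument; the only subtle point is working with closed balls in the Portmanteau step, which sidesteps the need to restrict $\varepsilon$ to the (at most countably many) radii where $\nu(\partial B_\varepsilon(x)) > 0$.
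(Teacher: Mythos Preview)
Your argument is correct and reaches the same conclusion as the paper (the exceptional set is at most countable), but by a genuinely different route. The paper argues directly: it fixes a threshold $1/p$, picks $k$ distinct points $x_1,\dots,x_k$ where $\liminf_n \mass^\alpha(T_n\mres B_{r_i}(x_i)) \ge 1/p$, chooses disjoint balls, and uses the superadditivity $\sum_i \liminf_n (\cdot) \le \liminf_n \sum_i(\cdot) \le \sup_n \mass^\alpha(T_n)$ to bound $k$; hence each level set $A_p$ is finite and $\bigcup_p A_p$ is countable. Your approach instead packages the masses as positive measures $\nu_n$, passes to a weak-$\star$ limit $\nu$ along a subsequence, and invokes Portmanteau on closed balls to dominate $\liminf_n \nu_n(B_\varepsilon(x))$ by $\nu(\bar B_\varepsilon(x))\to\nu(\{x\})$, reducing everything to the classical fact that a finite measure has countably many atoms. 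The paper's method is slightly more elementary (no compactness extraction or Portmanteau needed) and works verbatim without assuming the $T_n$ are supported in a fixed compact set, which the lemma as stated does not assume; your version leans on the ambient compact $X=\bar B_R(0)$ from the paper's setup, though this is easily removed by using vague convergence on $\R^d$ in place of weak-$\star$ on $X$. Conversely, your argument is more conceptual: it identifies the obstruction set precisely as the atoms of a single limit measure, which makes the countability immediate.
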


\begin{proof}
We prove \cref{eq:energyball} by a simple covering argument (the same as in \cref{prop:ii} of \Cref{lem:properties} but in an $\haus^0$ fashion). Set
\begin{equation*}
A_p := \set{x \in \R^d : \forall \varepsilon_0 > 0, \exists \varepsilon \le \varepsilon_0 \text{ s.t. } \liminf_{n \to \infty} \mass^\alpha(T_n \mres B_\varepsilon(x)) \ge 1/p},
\end{equation*}
and take $k$ distinct points $x_i$ in this set, as well as suitable radii $r_i > 0$ so that the balls $\bar{B}_{r_i}(x_i)$ are disjoint and for every $i$, $\liminf_{n \to \infty} \mass^\alpha(T_n \mres B_{r_i}(x_i)) \ge 1/p$. We get
\begin{equation*}
\frac{k}{p} \le \sum_{i = 1}^k \liminf_{n \to \infty} \mass^\alpha(T_n \mres B_{r_i}(x_i)) \le \liminf_{n \to \infty} \mass^\alpha(T_n) \le \sup_{n \in \N} \mass^\alpha(T_n).
\end{equation*}
So $k$ is bounded, hence $\haus^0(A_p) < \infty$. Therefore $\bigcup_{p \in \N^\star} A_p$ is at most countable.
\end{proof}




We continue with the existence of quasi-cycles.

\begin{lem} \label{lem:quasicycles}
Let $\{ {\tplan P}_n \}_{n \in \N}$ and ${\tplan P}$ be traffic plans such that ${\tplan P}_n \weakstarto {\tplan P}$. Assume there exist $x, y \in \R^d$ and $\delta > 0$ satisfying $\min \left\{{\tplan P}(\Gamma(x, y)) , {\tplan P}(\Gamma(y, x))\right\} \ge \delta$. Then:
\begin{equation*}
\forall \varepsilon > 0, \exists N \in \N \quad \textrm{s.t.} \quad \forall n \ge N, \quad \min \left\{{\tplan P}_n(\Gamma_\varepsilon(x, y)), {\tplan P}_n(\Gamma_\varepsilon(y, x))\right\} \ge \delta/2.
\end{equation*}
\end{lem}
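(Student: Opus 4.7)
The plan is to recognize this as a direct application of the Portmanteau-type inequality for weak-$\star$ convergence of finite measures on the compact metric space $\lipone$. Recall that weak-$\star$ convergence on a compact metric space implies that $\liminf_n \mu_n(U) \ge \mu(U)$ for every open set $U$. Since $\Gamma(x,y) \subseteq \Gamma_\varepsilon(x,y)$ trivially, if we can show that $\Gamma_\varepsilon(x,y)$ is open in $\lipone$, then
\[
\liminf_{n \to \infty} \tplan P_n(\Gamma_\varepsilon(x,y)) \ge \tplan P(\Gamma_\varepsilon(x,y)) \ge \tplan P(\Gamma(x,y)) \ge \delta,
\]
and in particular $\tplan P_n(\Gamma_\varepsilon(x,y)) \ge \delta/2$ for $n$ sufficiently large. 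The same argument applied to $\Gamma_\varepsilon(y,x)$ yields a common threshold $N$ (by taking the maximum of the two), concluding the proof.

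The one step requiring a brief verification is that $\Gamma_\varepsilon(x,y)$ is open in the topology of uniform convergence on compact subsets of $\R_+$. I would argue as follows: for any fixed $t \in \R_+$, the evaluation map $e_t : \lipone \to X$, $\gamma \mapsto \gamma(t)$, is continuous because convergence uniform on compacts implies pointwise convergence. Writing
\[
\Gamma_\varepsilon(x,y) = \bigcup_{0 \le s \le t} \Bigl( e_s^{-1}(B_\varepsilon(x)) \cap e_t^{-1}(B_\varepsilon(y)) \Bigr),
\]
we express $\Gamma_\varepsilon(x,y)$ as a union of finite intersections of preimages of open sets, hence as an open set. Alternatively, one can check openness directly: given $\gamma \in \Gamma_\varepsilon(x,y)$ with witnesses $s \le t$, pick $\eta > 0$ with $\gamma(s) \in B_{\varepsilon - \eta}(x)$, $\gamma(t) \in B_{\varepsilon - \eta}(y)$; any $\tilde \gamma$ close enough to $\gamma$ uniformly on $[0,t]$ still satisfies $\tilde\gamma(s) \in B_\varepsilon(x)$ and $\tilde\gamma(t) \in B_\varepsilon(y)$.

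I do not anticipate any real obstacle: the statement is essentially a packaging of lower semicontinuity of measure on open sets under weak-$\star$ convergence, together with the obvious inclusion $\Gamma(x,y) \subseteq \Gamma_\varepsilon(x,y)$. The only conceptual point to be careful about is that one cannot apply weak-$\star$ convergence directly to the sets $\Gamma(x,y)$ or $\Gamma(y,x)$, which are \emph{not} open (they require exact passage through single points), and this is precisely why the $\varepsilon$-thickened sets $\Gamma_\varepsilon$ are introduced in the first place. The factor $1/2$ in the conclusion could be replaced by any $\theta < 1$ at the price of enlarging $N$.
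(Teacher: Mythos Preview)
Your proposal is correct and follows essentially the same approach as the paper: both show that $\Gamma_\varepsilon(x,y)$ is open in $\lipone$ (the paper uses exactly your ``direct'' argument with $\eta = \min\{\varepsilon - |\gamma(s)-x|,\varepsilon - |\gamma(t)-y|\}$), then invoke the Portmanteau lower bound $\liminf_n \tplan P_n(U) \ge \tplan P(U)$ together with the inclusion $\Gamma(x,y)\subseteq\Gamma_\varepsilon(x,y)$. Your additional remarks on why the $\varepsilon$-thickening is needed and on the flexibility of the constant $1/2$ are accurate.
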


\begin{proof}
Notice that $\Gamma_\varepsilon(x, y)$ is an open subset of $\lipone$ (recall that it is endowed with the topology of uniform convergence on compact subsets of $\R_+$). Indeed, take $\gamma \in \Gamma_\varepsilon(x, y)$ and denote $s < t$ such that $\gamma(s) \in B_\varepsilon(x)$ and $\gamma(t) \in B_\varepsilon(x)$. Then any curve $\tilde{\gamma}$ such that $\| \tilde{\gamma} - \gamma \|_{\infty, [0, t]} < \min \{ \varepsilon - |\gamma(s) - x|, \varepsilon - |\gamma(t) - y| \}$ belongs to $\Gamma_\varepsilon(x, y)$. Thus $\liminf_{n \to \infty} {\tplan P}_n(\Gamma_\varepsilon(x, y)) \ge {\tplan P}(\Gamma_\varepsilon(x, y))\ge {\tplan P}(\Gamma(x, y)) \ge \delta$ (by \cite[Theorem 1.40]{EG}), hence the result. The same holds true for $\Gamma_\varepsilon(y, x)$ after exchanging $x$ and $y$.
\end{proof}

\subsection{Removing quasi-cycles}

Here we show that if $\tplan P$ has an \enquote{$\eps$-cycle} of mass $m$ in the sense that
\[\min \left\{{\tplan P}(\Gamma_\varepsilon(x, y)), {\tplan P}(\Gamma_\varepsilon(y, x)) \right\} \geq m,\]
then one may do a shortcut to reduce the  $\alpha$-energy of $\tplan P$ up to error terms equal to the energy of $\tplan P$ on the balls $B_{2\eps}(x), B_{2\eps}(y)$.

\begin{prp} \label{prop:comp}
Let ${\tplan P} \in \tplanset(\mu^-,\mu^+)$ be a traffic plan with finite energy supported on the set of simple curves. Assume that there exists $\eps_0 \in (0, |y - x|/8]$ such that
\[m := \min \{ {\tplan P}(\Gamma_{\varepsilon_0}(x, y)), {\tplan P}(\Gamma_{\varepsilon_0}(y, x)) \} > 0.\]
Then there exists $\bar T \in \tpathset(\mu^-,\mu^+)$ such that
\begin{equation*}
\mass^\alpha(\bar T) \le \ener^\alpha({\tplan P}) - \alpha {\tplan P}(\lipone)^{\alpha - 1} m |y - x| + \ener^\alpha({\tplan P}, B_{2 \varepsilon_0}(x)) + \ener^\alpha({\tplan P}, B_{2 \varepsilon_0}(y)).
\end{equation*}
\end{prp}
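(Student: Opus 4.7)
The plan is to construct $\bar T$ by identifying the two opposing mass-$m$ streams of curves in $\tplan P$ passing through $\bar B_{\varepsilon_0}(x)$ and $\bar B_{\varepsilon_0}(y)$ in opposite orders, extracting their middle pieces, and replacing each middle by a three-piece shortcut through the segment $[x,y]$. The two rerouted streams carry opposite orientations of mass $m$ on $[x,y]$ and cancel at the Eulerian level, producing the main saving of order $\alpha\tplan P(\lipone)^{\alpha-1}m|y-x|$ via concavity of $t\mapsto t^\alpha$.

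\textbf{Step 1 (Sub-plans, selections, construction).} First I would extract Borel sub-traffic-plans $\tplan P_1,\tplan P_2\le\tplan P$ of mass $m$ supported respectively on $\Gamma_{\varepsilon_0}(x,y)$ and $\Gamma_{\varepsilon_0}(y,x)$, together with measurable selections $s_i\le t_i$ of witnessing times (so $\gamma(s_1),\gamma(t_1)\in\bar B_{\varepsilon_0}(x),\bar B_{\varepsilon_0}(y)$ and symmetrically for $\tplan P_2$). Let $M_i:=\int I_{\gamma|_{[s_i,t_i]}}\,d\tplan P_i(\gamma)$ be the middle currents and $[p\to q]$ the oriented segment current from $p$ to $q$. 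Set
\[
R_x:=\int[\gamma(s_1)\to x]\,d\tplan P_1(\gamma)+\int[x\to\gamma(t_2)]\,d\tplan P_2(\gamma)
\]
supported in $\bar B_{\varepsilon_0}(x)$, and define $R_y$ analogously in $\bar B_{\varepsilon_0}(y)$. Define
\[
\bar T:=T-M_1-M_2+R_x+R_y.
\]
A direct boundary computation gives $\partial(R_x+R_y)=\partial(M_1+M_2)$, which corresponds to replacing each middle by the shortcut $\gamma(s_i)\to c_i\to c_i'\to\gamma(t_i)$ (with $(c_1,c_1')=(x,y)$, $(c_2,c_2')=(y,x)$) and cancelling the two mass-$m$ $[x,y]$-segments of opposite orientation. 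Hence $\partial\bar T=\partial T$ and $\bar T\in\tpathset(\mu^-,\mu^+)$.

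\textbf{Step 2 (Concavity gain outside $K:=\bar B_{2\varepsilon_0}(x)\cup\bar B_{2\varepsilon_0}(y)$).} Since $\varepsilon_0\le|y-x|/8$ the two balls are disjoint, and $R_x=R_y=0$ on $K^c$, so $\bar T\mres K^c=(T-M_1-M_2)\mres K^c$. Using rectifiability of $\tplan P$ and its support on simple curves, all densities of $T,M_1,M_2$ are collinear with $\tau_{\Sigma_{\tplan P}}$ at $\haus^1$-a.e.\ point of $\Sigma_{\tplan P}\cap K^c$, and since each middle is a sub-curve of a $\tplan P$-curve, the full multiplicity satisfies $\Theta_{\bar T}(z)=\Theta_{\tplan P}(z)-\Delta(z)$ where $\Delta:=\Theta_{M_1}+\Theta_{M_2}$. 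In particular $|\vec\theta_{\bar T}(z)|\le\Theta_{\tplan P}(z)-\Delta(z)$. Setting $M:=\tplan P(\lipone)$, concavity of $t\mapsto t^\alpha$ together with $\theta_{\tplan P}\le\Theta_{\tplan P}\le M$ yields
\[
|\vec\theta_{\bar T}(z)|^\alpha\le\theta_{\tplan P}(z)^{\alpha-1}\Theta_{\tplan P}(z)-\alpha M^{\alpha-1}\Delta(z).
\]
Each middle piece travels from $\bar B_{\varepsilon_0}(x)$ to $\bar B_{\varepsilon_0}(y)$ (or vice versa), so its length in $K^c$ is at least $|y-x|-4\varepsilon_0$; thus $\int_{K^c}\Delta\,d\haus^1\ge2m(|y-x|-4\varepsilon_0)\ge m|y-x|$, and integrating gives
\[
\mass^\alpha(\bar T\mres K^c)\le\ener^\alpha(\tplan P,K^c)-\alpha M^{\alpha-1}m|y-x|.
\]

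\textbf{Step 3 (Inside estimate, and main obstacle).} Inside $K$, I would bound $\mass^\alpha(\bar T\mres K)$ by subadditivity $(a+b)^\alpha\le a^\alpha+b^\alpha$ applied to the decomposition $T\mres K-M_1\mres K-M_2\mres K+R_x+R_y$: the pieces coming from $T\mres K$ and $(M_1+M_2)\mres K$ are each controlled by $\ener^\alpha(\tplan P,K)$ via $\mass^\alpha(T'\mres K)\le\ener^\alpha(\tplan P',K)$ and $\Theta_{M_i}\le\Theta_{\tplan P}$. Summing with Step 2 and using $\ener^\alpha(\tplan P)=\ener^\alpha(\tplan P,K^c)+\ener^\alpha(\tplan P,B_{2\varepsilon_0}(x))+\ener^\alpha(\tplan P,B_{2\varepsilon_0}(y))$ then yields the claimed inequality. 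The hard part is controlling $\mass^\alpha(R_x+R_y)$: a naive cone construction gives $\mass^\alpha(R_x)\lesssim m^\alpha\varepsilon_0$, which in general is not absorbable into the local energy $\ener^\alpha(\tplan P,B_{2\varepsilon_0}(x))\gtrsim M^{\alpha-1}m\varepsilon_0$ when $m\ll M$. The fix should be to choose $R_x,R_y$ exploiting the existing rectifiable network of $\tplan P$ inside the small balls (routing mass along branches of $T$ rather than by straight segments), so that the overall error collapses into exactly the two local energies appearing in the statement.
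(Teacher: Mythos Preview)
Your overall architecture is close to the paper's: remove the middle portions of the two opposing streams, harvest the concavity gain, and repair the boundary defect near $x$ and $y$. Step~2 is essentially correct (modulo the care needed when $\Gamma_{\varepsilon_0}(x,y)\cap\Gamma_{\varepsilon_0}(y,x)$ has positive mass, which the paper handles by a further case split into $\Lambda_\varepsilon(x,y)$, $\Lambda_\varepsilon(y,x)$, $\Lambda_\varepsilon(x,y,x)$, $\Lambda_\varepsilon(y,x,y)$). The genuine gap is exactly where you locate it: the control of $\mass^\alpha(R_x+R_y)$.

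Your estimate $\mass^\alpha(R_x)\lesssim m^\alpha\varepsilon_0$ is in fact too optimistic rather than too pessimistic. The cone from $x$ over a measure $\nu$ supported on $\partial B_\varepsilon(x)$ has $\mass^\alpha$ equal to $\varepsilon\,\mass^\alpha(\nu)$; if the endpoint measure $(e_{s_1})_\sharp\tplan P_1+(e_{t_2})_\sharp\tplan P_2$ is non-atomic (which is the generic situation when you cut at arbitrary witness times $s_i,t_i$ inside the ball), the resulting $R_x$ is not rectifiable and $\mass^\alpha(R_x)=+\infty$. So cutting at arbitrary witness points cannot work, and the vague suggestion of ``routing along branches of $T$'' does not by itself force atomicity of the defect together with the right $\alpha$-mass bound.

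The paper's fix is a slicing/averaging argument that you are missing. One does not cut at the witness times but on a sphere $\partial B_\varepsilon(x)$ for a well-chosen $\varepsilon\in[\varepsilon_0,2\varepsilon_0]$: by rectifiability of $\tplan P$, for a.e.\ $\varepsilon$ the slice intensity $\islice{\tplan P,d_x,\varepsilon}=\int_{\lipone}\sum_{t\in(d_x\circ\gamma)^{-1}(\varepsilon)}\delta_{\gamma(t)}\,d\tplan P(\gamma)$ is purely atomic, supported on the countable set $\Sigma_{\tplan P}\cap\partial B_\varepsilon(x)$ with atoms $\Theta_{\tplan P}(z)$. The coarea-type estimate of \Cref{prop:slice},
\[
\int_{\varepsilon_0}^{2\varepsilon_0}\mass^\alpha(\islice{\tplan P,d_x,\varepsilon})\,d\varepsilon\ \le\ \ener^\alpha(\tplan P,B_{2\varepsilon_0}(x)\setminus B_{\varepsilon_0}(x)),
\]
then produces some $\varepsilon$ with $\mass^\alpha(\islice{\tplan P,d_x,\varepsilon})+\mass^\alpha(\islice{\tplan P,d_y,\varepsilon})\le\varepsilon_0^{-1}\bigl(\ener^\alpha(\tplan P,B_{2\varepsilon_0}(x))+\ener^\alpha(\tplan P,B_{2\varepsilon_0}(y))\bigr)$. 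Cutting all relevant curves at their first and last crossings of $\partial B_\varepsilon$, the boundary defect $S_\varepsilon^x$ is dominated by the slice intensity, and the repair cone satisfies $\mass^\alpha(x\cone S_\varepsilon^x)\le\varepsilon\,\mass^\alpha(S_\varepsilon^x)\le\varepsilon\,\mass^\alpha(\islice{\tplan P,d_x,\varepsilon})$, which combined with the averaging bound yields precisely the two local-energy error terms in the statement. With this cutting locus there is also no need for your $K$/$K^c$ split: one bounds $\mass^\alpha(\tilde T)\le\ener^\alpha(\tilde{\tplan P})$ globally by the concavity argument of your Step~2, and the cones are added on top by subadditivity of $\mass^\alpha$.
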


\begin{proof}

\textit{Step 1 \-- Choice of a suitable radius.}
Since ${\tplan P}$ is rectifiable, recalling \Cref{prop:slice}, we have:
\begin{multline*}
\frac{1}{\varepsilon_0} \int_{\varepsilon_0}^{2\varepsilon_0} \mass^\alpha(\islice{\tplan P,d_x,\eps}) + \mass^\alpha(\islice{\tplan P,d_y,\eps}) \dd{\varepsilon}\\
\le \frac{\ener^\alpha({\tplan P}, \{\varepsilon_0 \le d_x \le 2 \varepsilon_0\}) + \ener^\alpha({\tplan P}, \{\varepsilon_0 \le d_y \le 2 \varepsilon_0\})}{\varepsilon_0},
\end{multline*}
where $d_u : z \in \R^d \mapsto |z - u|$.
Therefore, there exists $\varepsilon \in [\varepsilon_0, 2 \varepsilon_0]$ such that
\begin{align} \label{eq:boundenergy}
\mass^\alpha(\islice{\tplan P,d_x,\eps}) + \mass^\alpha(\islice{\tplan P,d_y,\eps})
&\le \frac{\ener^\alpha({\tplan P}, \{\varepsilon_0 \le d_x \le 2 \varepsilon_0\}) + \ener^\alpha({\tplan P}, \{\varepsilon_0 \le d_y \le 2 \varepsilon_0\})}{\varepsilon_0} \nonumber \\
&\le \frac{\ener^\alpha({\tplan P}, B_{2 \varepsilon_0}(x)) + \ener^\alpha({\tplan P}, B_{2 \varepsilon_0}(y))}{\varepsilon_0}.
\end{align}
Since $\varepsilon \geq \eps_0$, we still have $\min\{{\tplan P}(\Gamma_\varepsilon(x, y)), {\tplan P}(\Gamma_\varepsilon(y, x))\} \ge m$.

\emph{Step 2 \-- Construction of the shortcut.}
Given $u \in \R^d$ and $\gamma \in \lipone$, we define:
\begin{equation*}
\begin{split}
&t_u^-(\gamma) := \inf \set{t \in [0, T_\infty(\gamma)] : \gamma(t) \in \partial B_\varepsilon(u)} \quad \text{ and } \\
&t_u^+(\gamma) := \sup \set{t \in  [0, T_\infty(\gamma)]  : \gamma(t) \in \partial B_\varepsilon(u)},
\end{split}
\end{equation*}
which belong to $[0, \infty]$, accepting the abuse of notation that $\inf \emptyset=0$ and $\sup \emptyset=0$. For any curve $\gamma \in \Gamma_\varepsilon(x, y)$ with $T_\infty(\gamma)<\infty$, we have that $t_x^-(\gamma)$ and $t_y^+(\gamma)$ belong to $[0, T_\infty(\gamma)]$ and satisfy $t_x^-(\gamma) < t_y^+(\gamma)$, given that $B_\varepsilon(x)$ and $B_\varepsilon(y)$ are disjoint (since $\varepsilon \le 2 \varepsilon_0 \le |y - x|/4$). Then, for any curve $\gamma \in \Gamma_\varepsilon(x, y)$ with $T_\infty(\gamma)<\infty$, we set
\begin{equation*}
\varphi_0^u(\gamma) := \gamma_{\vert [0, t_u^-(\gamma)]} \quad \text{and} \quad \varphi_\infty^u(\gamma) := \gamma_{\vert [t_u^+(\gamma), + \infty)}.
\end{equation*}
Lastly we consider arbitrary (non relabeled) measurable extensions of $\varphi_0^u$ and $\varphi_\infty^u$ to $\lipone$.
We also set:
\begin{align*}
\Lambda_\varepsilon(x, y) &:= \Gamma_\varepsilon(x, y) \setminus \Gamma_\varepsilon(y, x),
& \Lambda_\varepsilon(x, y, x) &:= \set{\gamma \in \Gamma_\varepsilon(x, y) \cap \Gamma_\varepsilon(y, x) : t_x^-(\gamma) < t_y^-(\gamma)}, \\
\Lambda_\varepsilon(y, x) &:= \Gamma_\varepsilon(y, x) \setminus \Gamma_\varepsilon(x, y),
& \Lambda_\varepsilon(y, x, y) &:= \set{\gamma \in \Gamma_\varepsilon(x, y) \cap \Gamma_\varepsilon(y, x) : t_y^-(\gamma) < t_x^-(\gamma)}.
\end{align*}
Defining
\begin{equation*}
m_{x} := \frac{\min\{{\tplan P}(\Lambda_\varepsilon(x, y)), {\tplan P}(\Lambda_\varepsilon(y, x))\}}{{\tplan P}(\Lambda_\varepsilon(x, y))} \quad \text{and} \quad m_{y} := \frac{\min\{{\tplan P}(\Lambda_\varepsilon(x, y)), {\tplan P}(\Lambda_\varepsilon(y, x))\}}{{\tplan P}(\Lambda_\varepsilon(y, x))}
\end{equation*}
with the convention $0/0 = 0$, we set:
\begin{align*}
\tplan Q_1 &:=
m_{x} {\tplan P} \mres \Lambda_\varepsilon(x, y)
- (\varphi_0^x)_\sharp m_{x} {\tplan P} \mres \Lambda_\varepsilon(x, y)
- (\varphi_\infty^y)_\sharp m_{x} {\tplan P} \mres \Lambda_\varepsilon(x, y) \\
&\qquad + m_{y} {\tplan P} \mres \Lambda_\varepsilon(y, x)
- (\varphi_0^y)_\sharp m_{y} {\tplan P} \mres \Lambda_\varepsilon(y, x)
- (\varphi_\infty^x)_\sharp m_{y} {\tplan P} \mres \Lambda_\varepsilon(y, x), \\
\tplan Q_2 &:=
{\tplan P} \mres \Lambda_\varepsilon(x, y, x)
- (\varphi_0^x)_\sharp {\tplan P} \mres \Lambda_\varepsilon(x, y, x)
- (\varphi_\infty^x)_\sharp {\tplan P} \mres \Lambda_\varepsilon(x, y, x) \\
&\qquad + {\tplan P} \mres \Lambda_\varepsilon(y, x, y)
- (\varphi_0^y)_\sharp {\tplan P} \mres \Lambda_\varepsilon(y, x, y)
- (\varphi_\infty^y)_\sharp {\tplan P} \mres \Lambda_\varepsilon(y, x, y).
\end{align*}
Furthermore, we define the traffic plan:
\begin{equation*}
\tilde{{\tplan P}} := {\tplan P} - \tplan Q_1 - \tplan Q_2.
\end{equation*}

We observe $\tilde{{\tplan P}}(\lipone) \le 3 {\tplan P}(\lipone)$. $\tilde{{\tplan P}}$ is supported on the set of simple curves and it is rectifiable. Moreover $\Sigma_{\tilde{{\tplan P}}}\subseteq \Sigma_{\tplan P}$.

One can easily check that
\begin{align*}
(e_\infty)_\sharp \tplan Q_1 - (e_0)_\sharp \tplan Q_1 &= \begin{multlined}[t]
(e_{t_y^+})_\sharp m_x {\tplan P} \mres \Lambda_\varepsilon(x, y) - (e_{t_x^-})_\sharp m_x {\tplan P} \mres \Lambda_\varepsilon(x, y)\\
+ (e_{t_x^+})_\sharp m_y {\tplan P} \mres \Lambda_\varepsilon(y, x) - (e_{t_y^-})_\sharp m_y {\tplan P} \mres \Lambda_\varepsilon(y, x),
\end{multlined}\\
\shortintertext{and}
(e_\infty)_\sharp \tplan Q_2 - (e_0)_\sharp \tplan Q_2 &= \begin{multlined}[t]
(e_{t_x^+})_\sharp {\tplan P} \mres \Lambda_\varepsilon(x, y, x) - (e_{t_x^-})_\sharp {\tplan P} \mres \Lambda_\varepsilon(x, y, x)\\
+ (e_{t_y^+})_\sharp {\tplan P} \mres \Lambda_\varepsilon(y, x, y) - (e_{t_y^-})_\sharp {\tplan P} \mres \Lambda_\varepsilon(y, x, y).
\end{multlined}
\end{align*}
Hence $(e_\infty)_\sharp \tplan Q_1 - (e_0)_\sharp \tplan Q_1 + (e_\infty)_\sharp \tplan Q_2 - (e_0)_\sharp \tplan Q_2 = S_\varepsilon^x + S_\varepsilon^y$ where we have grouped terms in $x$ and $y$, setting:
\begin{align*}
S_\varepsilon^x &:= \begin{multlined}[t]
(e_{t_x^+})_\sharp m_y {\tplan P} \mres \Lambda_\varepsilon(y, x) - (e_{t_x^-})_\sharp m_x {\tplan P} \mres \Lambda_\varepsilon(x, y)\\
+ (e_{t_x^+})_\sharp {\tplan P} \mres \Lambda_\varepsilon(x, y, x) - (e_{t_x^-})_\sharp {\tplan P} \mres \Lambda_\varepsilon(x, y, x),
\end{multlined}\\
S_\varepsilon^y &:= \begin{multlined}[t]
(e_{t_y^+})_\sharp m_x {\tplan P} \mres \Lambda_\varepsilon(x, y) - (e_{t_y^-})_\sharp m_y {\tplan P} \mres \Lambda_\varepsilon(y, x)\\
+ (e_{t_y^+})_\sharp {\tplan P} \mres \Lambda_\varepsilon(y, x, y) - (e_{t_y^-})_\sharp {\tplan P} \mres \Lambda_\varepsilon(y, x, y).
\end{multlined}
\end{align*}
Then we denote by $\tilde{T}$ the current induced by $\tilde{{\tplan P}}$. Since the boundary of $\tilde{T}$ is equal to $(e_\infty)_\sharp \tilde{{\tplan P}} - (e_0)_\sharp \tilde{{\tplan P}}$, this yields:
\begin{equation*}
\partial \tilde{T} = \mu^+ - \mu^- - (S_\varepsilon^x + S_\varepsilon^y).
\end{equation*}
Since $\tilde{T}$ does not irrigate the same measures as ${\tplan P}$, we adjust it by adding cones over $x$ and $y$ {(see \cite[26.26]{Sim})}:
\begin{equation*}
\bar T := \tilde{T} + x \cone S_\varepsilon^x + y \cone S_\varepsilon^y.
\end{equation*}
Since $S_\varepsilon^x(\R^d) = S_\varepsilon^y(\R^d) = 0$, then $\partial(x \cone S_\varepsilon^x) = S_\varepsilon^x$ and $\partial(y \cone S_\varepsilon^y) =  S_\varepsilon^y$. Hence we deduce that $\bar T \in \tpathset(\mu^-, \mu^+)$.

\emph{Step 3 \-- Energy estimate.}
Since $\tilde{{\tplan P}}$ is a traffic plan supported on the set of simple curves, we can write $\ener^\alpha(\tilde{{\tplan P}}) = \int_{\Sigma_{\tplan P}} \theta_{\tilde{{\tplan P}}}^\alpha \dd{\haus^1}$ and we find a bound on its $\alpha$-energy as follows:
\begin{align}\label{eq:enerestim1}
\ener^\alpha(\tilde{{\tplan P}}) &= \int_{\Sigma_{\tplan P}} \left( \theta_{\tplan P} - (\theta_{\tplan P} - \theta_{\tilde{{\tplan P}}}) \right)^\alpha \dd{\haus^1}\nonumber\\
&\le \int_{\Sigma_{\tplan P}} \left( \theta_{\tplan P}^\alpha - \alpha (\theta_{\tplan P} - \theta_{\tilde{{\tplan P}}}) \theta_{\tplan P}^{\alpha - 1} \right) \dd{\haus^1} \nonumber\\
&\le \int_{\Sigma_{\tplan P}} \theta_{\tplan P}^\alpha \dd{\haus^1} - \alpha {\tplan P}(\lipone)^{\alpha - 1} \int_{\Sigma_{\tplan P}} \left( \theta_{\tplan P} - \theta_{\tilde{{\tplan P}}} \right) \dd{\haus^1}\nonumber\\
&= \ener^\alpha({\tplan P}) - \alpha {\tplan P}(\lipone)^{\alpha - 1} \int_{\Sigma_{\tplan P}} \int_\lipone \one_{x \in \img \gamma} \dd(\tplan P - \tilde{\tplan P})(\gamma) \dd\haus^1(x) \nonumber\\
&= \ener^\alpha({\tplan P}) - \alpha {\tplan P}(\lipone)^{\alpha - 1} \int_\lipone \haus^1(\img \gamma) \dd(\tplan Q_1 + \tplan Q_2)(\gamma).
\end{align}
The first inequality follows from the concavity of $x \mapsto x^\alpha$ on $\R_+$, the second one is due to the fact that $\theta_{\tplan P} \le {\tplan P}(\lipone)$; then the equality consists in using the definition of the multiplicity and the fact that ${\tplan P}$ is supported on simple curves; and finally, Fubini's Theorem and the rectifiability of ${\tplan P}$ yield the last equality. In order to apply Fubini, one can first use the Hahn decomposition theorem to decompose the measure $\tplan Q_1 + \tplan Q_2$ in its positive and negative parts. Then one can apply Fubini on each of the two parts. 

In addition, we compute:
\begin{multline}\label{eq:enerestim2}
\int_\lipone \haus^1(\img \gamma) \dd{\tplan Q_1}(\gamma)\\
\begin{aligned}
 &= m_{x} \int_{\Lambda_\varepsilon(x, y)} \haus^1(\img \gamma) - \haus^1(\img \varphi_0^x(\gamma)) - \haus^1(\img \varphi_\infty^y(\gamma)) \dd \tplan P(\gamma)  \\
&\qquad + m_{y} \int_{\Lambda_\varepsilon(y, x)} \haus^1(\img \gamma) - \haus^1(\img \varphi_0^y(\gamma)) - \haus^1(\img \varphi_\infty^x(\gamma)) \dd \tplan P(\gamma)\\
&= m_{x} \int_{\Lambda_\varepsilon(x, y)} \haus^1\left(\img \gamma_{\vert [t_x^-(\gamma), t_y^+(\gamma)]}\right) \dd \tplan P(\gamma)\\
&\qquad+ m_{y} \int_{\Lambda_\varepsilon(y, x)} \haus^1\left(\img \gamma_{\vert [t_y^-(\gamma), t_x^+(\gamma)]}\right) \dd \tplan P(\gamma)\\
&\ge 2 (|y - x| - 2 \varepsilon) \min \{ {\tplan P}(\Lambda_\varepsilon(x, y)), {\tplan P}(\Lambda_\varepsilon(y, x)) \} \\
&\ge |y - x| \min \{ {\tplan P}(\Lambda_\varepsilon(x, y)), {\tplan P}(\Lambda_\varepsilon(y, x)) \}.
\end{aligned}
\end{multline}
The first equality is straightforward, the second is a consequence of the fact that ${\tplan P}$ is supported on the set of simple curves, then the inequality comes from $\gamma(t_x^-(\gamma)) \in \bar B_\eps(x)$ and $\gamma(t_y^+(\gamma)) \in\bar B_\eps(y)$, and the final inequality results from the assumption on $\varepsilon$. Similarly:
\begin{multline}\label{eq:enerestim3}
\int_\lipone \haus^1(\img \gamma) \dd{\tplan Q_2}(\gamma)\\
\begin{aligned}
 &= \begin{multlined}[t]\int_{\Lambda_\varepsilon(x, y, x)} \haus^1(\img \gamma) - \haus^1(\img \varphi_0^x(\gamma)) - \haus^1(\img \varphi_\infty^x(\gamma)) \dd \tplan P(\gamma)  \\
+ \int_{\Lambda_\varepsilon(y, x, y)} \haus^1(\img \gamma) - \haus^1(\img \varphi_0^y(\gamma)) - \haus^1(\img \varphi_\infty^y(\gamma)) \dd \tplan P(\gamma)
\end{multlined}\\
&= \int_{\Lambda_\varepsilon(x, y, x)} \haus^1\left(\img \gamma_{\vert [t_x^-(\gamma), t_x^+(\gamma)]}\right) \dd \tplan P(\gamma) + \int_{\Lambda_\varepsilon(y, x, y)} \haus^1\left(\img \gamma_{\vert [t_y^-(\gamma), t_y^+(\gamma)]}\right) \dd \tplan P(\gamma) \\
&\ge 2 (|y - x| - 2 \varepsilon) \left( {\tplan P}(\Lambda_\varepsilon(x, y, x)) + {\tplan P}(\Lambda_\varepsilon(y, x, y)) \right) \\
&\ge |y - x| {\tplan P}(\Gamma_\varepsilon(x, y) \cap \Gamma_\varepsilon(y, x)).
\end{aligned}
\end{multline}
Combining \Cref{eq:enerestim1,eq:enerestim2,eq:enerestim3} yields the following bound:
\begin{align*}
\ener^\alpha(\tilde{{\tplan P}}) &\le \ener^\alpha({\tplan P}) - \alpha {\tplan P}(\lipone)^{\alpha - 1} |y - x| ( \min \{ {\tplan P}(\Lambda_\varepsilon(x, y)), {\tplan P}(\Lambda_\varepsilon(y, x)) \}\\
&\qquad+ {\tplan P}(\Gamma_\varepsilon(x, y) \cap \Gamma_\varepsilon(y, x)))\\
&\le \ener^\alpha({\tplan P}) - \alpha {\tplan P}(\lipone)^{\alpha - 1} |y - x| \min \{ {\tplan P}(\Gamma_\varepsilon(x, y)), {\tplan P}(\Gamma_\varepsilon(y, x)) \} \\
&= \ener^\alpha({\tplan P}) - \alpha {\tplan P}(\lipone)^{\alpha - 1} |y - x| m.
\end{align*}

Moreover, by definition of $t_x^\pm$, $S_\varepsilon^x$ is supported on the sphere centered at $x$ with radius $\varepsilon$. Therefore $\mass^\alpha(x \cone S_\varepsilon^x) \le \varepsilon \mass^\alpha(S_\varepsilon^x)$. In addition, note that we have by definition $\mass^\alpha(S_\varepsilon^x) \le \mass^\alpha(\islice{\tplan P,d_x,\eps})$. The same computations also hold at $y$. Thus we find that $\partial \bar T = \mu^+ - \mu^-$ and by subadditivity of the $\alpha$-mass
\begin{align*}
\mass^\alpha(\bar T)
&\leq \mass^\alpha(\tilde T) + \varepsilon \left( \mass^\alpha(S_\varepsilon^x) + \mass^\alpha(S_\varepsilon^y) \right) \\
&\le \ener^\alpha({\tplan P}) - \alpha {\tplan P}(\lipone)^{\alpha - 1} |y - x| m + \varepsilon \left( \mass^\alpha(S_\varepsilon^x) + \mass^\alpha(S_\varepsilon^y) \right) \\
&\le \ener^\alpha({\tplan P}) - \alpha {\tplan P}(\lipone)^{\alpha - 1} |y - x| m + \varepsilon \left( \mass^\alpha(\islice{\tplan P,d_x,\eps}) + \mass^\alpha(\islice{\tplan P,d_y,\eps}) \right) \\
\alignedrefabove{\cref{eq:boundenergy}}{\le} \ener^\alpha({\tplan P}) - \alpha {\tplan P}(\lipone)^{\alpha - 1} |y - x| m + \ener^\alpha({\tplan P}, B_{2 \varepsilon_0}(x)) + \ener^\alpha({\tplan P}, B_{2 \varepsilon_0}(y)).\qedhere
\end{align*}
\end{proof}

\section{Proof of the main theorem}\label{sec:mainproof}

\begin{proof}[Proof of \Cref{mainthm}]
Take $\{\tplan P_n\}_{n\in\N}$ and $\tplan P$ satisfying the hypotheses of the theorem. First, we know that $\tplan P \in \tplanset(\mu^-,\mu^+)$, since the evaluation maps $e_0,e_\infty$ are continuous on $\tplanset_C \coloneqq \set{\tplan P \in \tplanset : \int_\lipone T_\infty \dd\tplan P \leq C}$ which is a closed subset of $\tplanset$, where $C \coloneqq \sup_n \int_\lipone T_\infty \dd\tplan P_n$ (see \cite[Section~1]{Peg17}). Set $\{T_n\}_{n\in\N}$ and $T$ to be the transport paths induced by $\{\tplan P_n\}_{n\in\N}$ and $\tplan P$. We first show that $\{T_n\}$ and $T$ satisfy the hypotheses of \cite[Theorem~1.1]{ColDeRMar19-2} so that $T$ is optimal; then we prove that $\tplan P$ is a good decomposition of $T$ by showing successively \cref{prop:C}, then \cref{prop:B} (which is the heart of the proof), and finally \cref{prop:A} of \Cref{gooddecompo}. From this we conclude that $\tplan P$ is optimal.

\textit{Step~1 \-- $T$ is optimal.} Let us prove that $T_n$ converges weakly-$\star$ to $T$. Let $\omega \in C^0(X,\R^d)$ and fix $\eps > 0$. By Markov's inequality, since all $T_n$'s and $T$ lie in $\tplanset_C$, there exists $t_0 \geq 0$ such that for all $n \in \N$, ${\tplan P}_n(T_\infty \ge t_0) \le \eps /3 \norm{\omega}_\infty$ and $\tplan P(T_\infty \ge t_0) \le \eps /3 \norm{\omega}_\infty$. The map $f_{t_0} : \gamma \mapsto \int_0^{t_0} \omega(\gamma(t)) \cdot \gamma'(t) \dd{t}$ is continuous on $\lipone$: if $\gamma_n \to \gamma$ then $\omega \circ \gamma_n \to \omega\circ \gamma$ strongly in $L^1([0,t_0])$ by the Dominated Convergence Theorem, and $\{|\gamma'_n|\}_{n \in \N}$ is bounded in $L^\infty([0, t_0])$ hence $f_{t_0}(\gamma_n) \to f_{t_0}(\gamma)$. Thanks to the weak-$\star$ convergence of ${\tplan P}_n$ to ${\tplan P}$, one has for $n$ large enough:
\[
\abs*{\langle T_n, \omega \rangle - \langle T, \omega \rangle} \le \abs*{\int_{T_\infty < t_0} f_{t_0}(\gamma) \dd{({\tplan P}_n - {\tplan P})}(\gamma)} + \frac{2\varepsilon}{3} \le \varepsilon,
\]
thus $\langle T_n, \omega \rangle \to \langle T, \omega \rangle$ as $\eps$ is arbitrary. By equivalence of the Lagrangian and Eulerian models (\cite[Theorem 2.4.1]{PegPhD} or \cite{PaoSte06}), an optimal traffic plan induces an optimal transport path, hence $T_n$ is optimal since ${\tplan P}_n$ is, and $\mass^\alpha(T_n) = \ener^\alpha(\tplan P_n)$ so the energy is uniformly bounded and we may apply the Eulerian stability result of \cite[Theorem~1.1]{ColDeRMar19-2} to conclude that $T \in \otpathset(\mu^-,\mu^+)$. 

\textit{Step~2 \-- Proof of \cref{prop:C}.} We only use the fact that $\mu^-$ and $\mu^+$ are mutually singular. Denote by $\Gamma$ the set of eventually constant curves of $\lipone$ such that $e_0(\gamma) = e_\infty(\gamma)$. We obviously have $(e_0)_\sharp {\tplan P} \mres \Gamma = (e_\infty)_\sharp {\tplan P} \mres \Gamma$, and
\begin{equation*}
(e_0)_\sharp {\tplan P} \mres \Gamma \le (e_0)_\sharp {\tplan P} = \mu^-, \quad (e_\infty)_\sharp {\tplan P} \mres \Gamma \le (e_\infty)_\sharp {\tplan P} = \mu^+.
\end{equation*}
Yet $\mu^-$ and $\mu^+$ are mutually singular, hence ${\tplan P}(\Gamma) = 0$. In particular, ${\tplan P}$-almost every curve $\gamma$ is non-constant and $e_0(\gamma) \ne e_\infty(\gamma)$, hence $\mass(\partial I_\gamma) = 2$. The fact that $\mu^-$ and $\mu^+$ are mutually singular allows to write $\mass(\partial T) = \mu^-(\R^d) + \mu^+(\R^d)$. Therefore
\begin{equation*}
\mass(\partial T) = 2 {\tplan P}(\lipone) = \int_\lipone \mass(\partial I_\gamma) \dd \tplan P(\gamma).
\end{equation*}

\textit{Step~3 \-- Proof of \cref{prop:B}.} It will result from the fact that $\haus^1( \{ \bar\theta_{\tplan P} > 0 \} ) = 0$, where we recall $\bar\theta_{\tplan P}$ is defined in \cref{eq:bartheta}. We argue by contradiction, assuming that $\haus^1( \{ \bar\theta_{\tplan P} > 0 \} ) > 0$, and we wish to show that this contradicts the optimality of a $T_n$ for some large $n$. 
Using \Cref{lem:concentration} and \Cref{thm:cycles}, take $x_0 \in \{ \bar \theta_{\tplan P} > 0 \}$ such that $\liminf_{n \to \infty} \mass^\alpha(T_n \mres B_\varepsilon(x_0)) = o(1)$
as $\varepsilon \to 0$, together with a set $G$ satisfying the conclusion of \Cref{thm:cycles}. In particular, $\haus^1(G) > 0$. By monotone convergence, there exists $r > 0$ such that the set $G' := G \cap \R^d \setminus B_r(x_0)$ has positive $\haus^1$-measure. Now let $0 < \tilde\eps \leq r/8$ be such that:
\begin{equation*}
\liminf_{n \to \infty} \mass^\alpha(T_n \mres B_{2 \tilde\eps}(x_0)) \le \alpha {\tplan P}(\lipone)^{\alpha - 1} r \frac{\bar \theta (x_0)}{128}.
\end{equation*}
There exists a subsequence $\{T_{\tilde n_k}\}_{k\in\N}\subseteq \{T_{ n}\}_{n\in\N}$ such that
\begin{equation} \label{eq:energyvanishing}
\forall k \in \N, \quad \mass^\alpha(T_{\tilde n_k} \mres B_{2 \tilde\eps}(x_0)) \le \alpha {\tplan P}(\lipone)^{\alpha - 1} r \frac{\bar \theta (x_0)}{64}.
\end{equation}
Then we choose $x \in G'$ satisfying the conclusion of \Cref{lem:concentration} for the subsequence $\{T_{\tilde n_k}\}_{k\in\N}$. Take $\eps \leq \tilde\eps$ such that for a further subsequence $\{T_{n_k}\}_{k\in\N}$, \cref{eq:energyvanishing} holds with $x,\eps, \{T_{n_k}\}$ in place of $x_0, \tilde \eps, \{T_{\tilde n_k}\}$. By monotonicity in $\eps$, notice that \cref{eq:energyvanishing} also holds with $x_0, \eps, \{T_{n_k}\}$. To simplify notation, the subsequence $\{T_{n_k}\}$ will just be denoted by $\{T_n\}$.  
Since
\[\min\{\tplan P(\Gamma(x,x_0), \tplan P(\Gamma(x,x_0)\} \geq \frac{\bar\theta(x_0)}4,\] 
we know by \Cref{lem:quasicycles} that there exists $N \in \N$ such that for any $n \ge N$,
\[\min \left\{{\tplan P}_n(\Gamma_\varepsilon(x_0, x)), {\tplan P}_n(\Gamma_\varepsilon(x, x_0))\right\} \ge \frac{\bar \theta(x_0)}8.\]
Moreover, notice that ${\tplan P}_n \weakstarto {\tplan P}$ implies ${\tplan P}_n(\lipone) \to {\tplan P}(\lipone)$ (because $\lipone$ is compact), so up to increasing $N$, one may assume ${\tplan P}_n(\lipone)^{\alpha - 1} \ge {\tplan P}(\lipone)^{\alpha - 1}/2$ for all $n \ge N$. Since $\varepsilon \le r/8 \le |x - x_0|/8$, we can apply \Cref{prop:comp} to ${\tplan P}_N$. Thus there exists a transport path $\bar T$ connecting $\mu_N^-$ to $\mu_N^+$ satisfying:
\begin{align*}
\mass^\alpha(\bar T) &\le \ener^\alpha({\tplan P}_{N}) - \alpha {\tplan P}_{N}(\lipone)^{\alpha - 1} \frac{\bar\theta(x_0)}{8} |x - x_0| + \alpha {\tplan P}(\lipone)^{\alpha - 1} r \frac{\bar \theta (x_0)}{32} \\
&\le \ener^\alpha({\tplan P}_{N}) - \alpha {\tplan P}(\lipone)^{\alpha - 1} |x - x_0| \frac{\bar \theta (x_0)}{16} + \alpha {\tplan P}(\lipone)^{\alpha - 1} |x - x_0| \frac{\bar \theta (x_0)}{32} \\
&\le \ener^\alpha({\tplan P}_{N}) - \alpha {\tplan P}(\lipone)^{\alpha - 1} |x - x_0| \frac{\bar \theta (x_0)}{32} \\
&< \ener^\alpha({\tplan P}_{N}),
\end{align*}
which contradicts the optimality of ${\tplan P}_{N}$. Hence $\bar\theta_{\tplan P}(x) = 0$ for $\haus^1$-a.e. $x \in \Sigma_{\tplan P}$ which by \Cref{cancelcharac}, is equivalent to $\abs{\vec \theta_{\tplan P}(x)} = \Theta_{\tplan P}(x)$. As a consequence, since $\tplan P$ is rectifiable, we have equality almost everywhere in \cref{eq:strict1}, which is equivalent to \cref{prop:B} by \Cref{ineq_good_decompo}.

\textit{Step~4 \-- Proof of \cref{prop:A}} This item follows from the absence of cancellations in ${\tplan P}$ and from the fact that $T$ is acyclic as an optimal transport path, as observed in \cite[Theorem~10.1]{PaoSte06} (notice that \cite[Theorem~1.1]{ColDeRMar19-2} is also essential here). We have already noticed in \text{Step~2} that $\tplan P$-a.e. curve is nonconstant, thus it remains to show that almost every curve is simple. Denote by $\Gamma$ the set of curves that are eventually constant but not simple. For any $\gamma \in \Gamma$, there exist $s < t$ such that $\gamma(s) = \gamma(t)$ and $\gamma$ is non-constant on $[s, t]$, i.e. $\gamma_{|[s,t]}$ is a nontrivial loop. Let $r : \Gamma \to \lipone$ a map that associates to each $\gamma \in \Gamma$ a nontrivial loop $\gamma_{|[s(\gamma),t(\gamma)]}$. Note that one can build $r$ to be Borel: for example, one can check there exists a finite number of loops with maximal length and take the first one. Then for any $\gamma \in \Gamma$, $I_{r(\gamma)}$ is a cycle (in the sense of currents), that is to say $\partial I_{r(\gamma)} = 0$. Denote by $S$ the current induced by the traffic plan $r_\sharp {\tplan P}$: it is obviously a cycle.

If $x \in\Sigma_{\tplan P}$ is a regular point for $\tplan P$, since $\tplan P$ has no cancellation we know by \Cref{cancelcharac} that there exists $s\in \{-1,+1\}$ such that $\gamma'(t) = s\abs{\gamma'(t)}\tau_{\Sigma_{\tplan P}}(x)$ for every $t\in \gamma^{-1}(x)$ and ${\tplan P}$-almost every curve $\gamma$, which implies also:
\begin{equation}\begin{split}
\abs{\vec \theta_{r_\sharp \tplan P}(x)} &= \abs*{\int_\Gamma \vec m_{r(\gamma)}(x) \dd \tplan P(\gamma)} = \int_\Gamma \# r(\gamma)^{-1}(x) \dd \tplan P(\gamma),\\
&\leq \int_\Gamma \# \gamma^{-1}(x) \dd \tplan P(\gamma)= \abs*{\int_\Gamma \vec m_{\gamma}(x) \dd \tplan P(\gamma)} = \abs{\vec \theta_{\tplan P}(x)}.
\end{split}
\end{equation}
Because they are all positively colinear, we get:
\begin{equation}
\abs*{\vec \theta_{\tplan P}(x)} = \abs{\vec \theta_{r_\sharp \tplan P}(x)} + \abs{\vec \theta_{\tplan P}(x)-\vec \theta_{r_\sharp \tplan P}(x)},
\end{equation}
and knowing that $T = \rectcurr{\Sigma_{\tplan P},\vec \theta_{\tplan P}(x)}$ and $S = \rectcurr{\Sigma_{\tplan P},\vec \theta_{r_\sharp \tplan P}(x)}$, integrating over $\Sigma_{\tplan P}$ yields:
\begin{equation}
\mass(T) = \mass(S) + \mass(T-S).
\end{equation}
However, $T$ is acyclic as an optimal transport path, thus $S = 0$. Yet by Fubini's Theorem and the Area Formula:
\begin{equation*}
0 = \mass(S) = \int_{\Sigma_{\tplan P}} \int_\Gamma \# r(\gamma)^{-1}(x) \dd \tplan P(\gamma) \dd{\haus^1}(x)
= \int_\Gamma \text{length} \, r(\gamma) \dd \tplan P(\gamma),
\end{equation*}
from which we deduce ${\tplan P}(\Gamma) = 0$, since $\text{length} \, r(\gamma) > 0$ for every $\gamma \in \Gamma$.

\textit{Step~5 \-- $\tplan P$ is optimal.} Let us conclude. By \cref{prop:B}, $\abs{\vec \theta_{\tplan P}(x)} = \int_\lipone \abs{\vec m_\gamma(x)} \dd\tplan P(x)$, and by \cref{prop:A}, $\int_\lipone \abs{\vec m_\gamma(x)} \dd\tplan P(x) = \theta_{\tplan P}(x) = \Theta_{\tplan P}(x)$ for $\haus^1$-a.e. $x$, hence:
\[\ener^\alpha(\tplan P) = \int_{\Sigma_{\tplan P}} \theta_{\tplan P}^{\alpha -1} \Theta_{\tplan P} \dd\haus^1 = \int_{\Sigma_{\tplan P}} \abs{\vec \theta_{\tplan P}}^\alpha \dd\haus^1 = \mass^\alpha(T).\]
Since $T$ is optimal, by equivalence of the Lagrangian and Eulerian models we know that the optimal costs are the same and we get that $\tplan P \in \otplanset(\mu^-,\mu^+)$.
\end{proof}

\subsection*{Acknowledgments}
	M. C. was partially supported by the Swiss National Science Foundation grant 200021\_\\182565. A. D. R. has been supported by the NSF DMS Grant No.~1906451. A. M. acknowledges partial support from GNAMPA-INdAM. P. P. wishes to thank M. C. and the AMCV Chair for the invitation for a semester at EPFL during which this research was partly conducted. A. P. acknowledges support from the Fondation Mathématique Jacques Hadamard and from the AMCV chair at EPFL, and thanks M. C. and P. P. for their supervision of his Master 1 thesis.

\printbibliography

\noindent%
Maria Colombo
\\
EPFL SB, Station 8, 
CH-1015 Lausanne, Switzerland
\\
e-mail M.C.: {\texttt maria.colombo@epfl.ch}
\\
~
\\
Antonio De Rosa
\\
Courant Institute of Mathematical Sciences, New York University, New York, NY, USA
\\
e-mail A.D.R.: {\texttt derosa@cims.nyu.edu}
\\
~
\\
Andrea Marchese
\\
Dipartimento di Matematica Via Sommarive, 14 - 38123 Povo - Italy\\
e-mail A.M.: {\texttt andrea.marchese@unitn.it}
\\
~
\\
Paul Pegon
\\
Universit\'e Paris-Dauphine, PSL Research University, Ceremade, INRIA, Project team Mokaplan, France\\
e-mail P.P.: {\texttt pegon@ceremade.dauphine.fr}
\\
~
\\
Antoine Prouff
\\
DER de Mathématiques, ENS Paris-Saclay, Université Paris-Saclay, France\\
e-mail A.P.: {\texttt antoine.prouff@ens-paris-saclay.fr}

\end{document}